\documentclass[reqno,11pt]{amsart}
 \linespread{1.1}
 
 \usepackage[square,sort,comma,numbers]{natbib}
\usepackage{palatino}
\usepackage{amsfonts,amsmath} 
\usepackage{amsthm,soul}
\usepackage{graphicx,enumerate}
\usepackage[hmarginratio=1:1,hscale=0.75]{geometry}
\usepackage[latin1]{inputenc}
\usepackage[all]{xy}

\usepackage{amssymb,mathtools}
\usepackage{stmaryrd}
\usepackage{hyperref}
\usepackage{pdfsync}
\usepackage{color}
\usepackage[T1]{fontenc} 
\usepackage{lmodern}

\newtheorem{counter}{Counter}[section]
\newtheorem{lem}[counter]{Lemma}

\newtheorem{thm}[counter]{Theorem}

\newtheorem{prop}[counter]{Proposition}

\newtheorem{conj}[counter]{Conjecture}

\newcommand{\R}{\mathbb{R}}


\newcommand{\lra}{\longrightarrow} 
\newcommand{\Lra}{\Longrightarrow}
\newcommand{\ra}{\rightarrow}

 \newcommand{\sse}{\subseteq}
\renewcommand{\~}{\tilde}

 \newcommand{\sh}{\sharp}
\newcommand{\pd}{\partial}

\newcommand{\fal}{\forall}
\newcommand{\8}{\infty}

\newcommand{\vep}{\varepsilon} 

\newcommand{\al}{\alpha}
 \newcommand{\bt}{\beta}
\newcommand{\gm}{\gamma}
 \newcommand{\ggm}{\Gamma}

 \newcommand{\om}{\Omega}
 
\renewcommand{\vartheta}{\Theta}

\DeclareMathOperator{\dist}{dist}

\DeclareMathOperator{\diam}{{\operatorname{diam}}}
\DeclareMathOperator{\argmin}{{\operatorname{argmin}}}

{   \end{list} }

\definecolor{mygreen}{rgb}{0.1,0.75,0.2}

\renewcommand{\d}{\,{\operatorname{d}}}


\begin{document}
\date{}

\title[Optimal Centroidal Voronoi Tesselations in 3D]{Bounds on the Geometric Complexity of Optimal Centroidal Voronoi Tesselations in 3D}

\author{Rustum Choksi}
\address{Department of Mathematics and Statistics, McGill University, Montr\'{e}al, QC Canada}
\email{rustum.choksi@mcgill.ca}
\author{Xin Yang Lu}
\address{Department of Mathematical Sciences, Lakehead University, Thunder Bay, ON, Canada and Department of Mathematics and Statistics, McGill University, Montr\'{e}al, QC Canada}
\email{xlu8@lakeheadu.ca}

\begin{abstract}
Gersho's conjecture in 3D asserts the asymptotic periodicity and structure of the optimal centroidal Voronoi tessellation. This relatively simple crystallization problem remains to date open. We prove bounds on the geometric complexity of optimal centroidal Voronoi tessellations as the number of generators tends to infinity. Combined with an approach introduced by Gruber in 2D, these bounds reduce 
the resolution of the 3D Gersho's conjecture to a finite, albeit very  large,  computation
 of an explicit convex problem in finitely many variables.

 \end{abstract}

\maketitle

 \textbf{Keywords.}
Optimal centroidal Voronoi tessellation, optimal block quantization, Gersho's conjecture in 3D, crystallization. 

\textbf{Classification. }
52C35, 52C45, 52C07, 49Q20, 82D25. 

\section{Introduction}

A fundamental problem (cf. \cite{DFG, CS, Gr3}) in  both information theory and 
discrete geometry  is known, respectively,  as {\it optimal block quantization} or {\it optimal centroidal Voronoi tessellations} (CVT).  To state the problem, consider a bounded domain in $\R^N$, say  a cube  $Q=[0,1]^N$, and for a collection of points $y_k \in Y=\{y_1,\cdots,y_n\}\sse Q$,  define the associated Voronoi regions (comprising a Voronoi tessellation of $Q$)
\[ V_k  \, = \, \{ x \in Q \, | \, |x - y_k| \le |x - y_i| \,\, \forall \,\, i \ne k \}.\]
A 2D illustration with $n = 6$ is presented on the left of Figure \ref{fig2}. 
\begin{figure}
  {\includegraphics[width=0.3\textwidth]{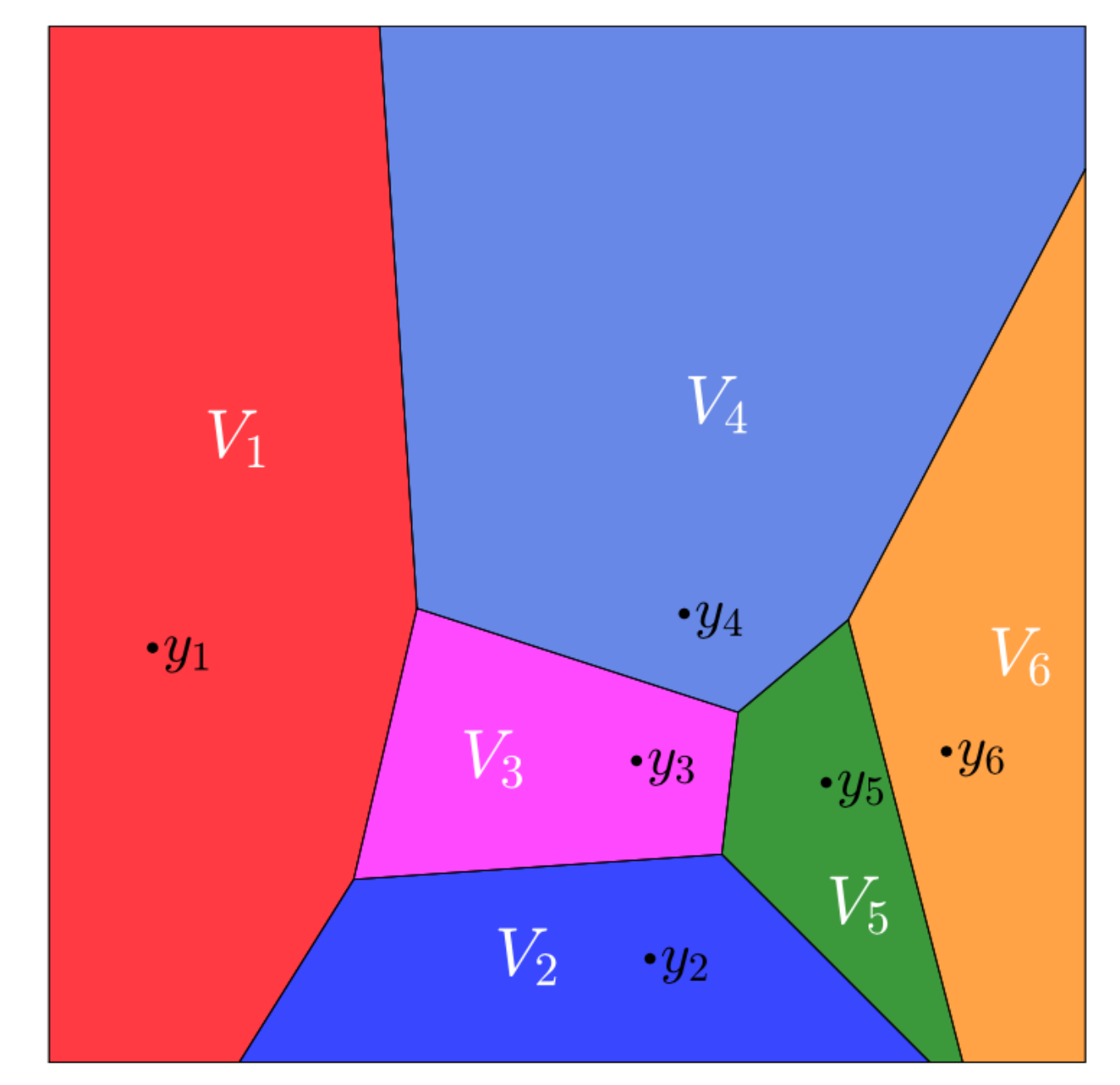}}  \hspace{-0.38cm}
    {\includegraphics[width=0.26\textwidth]{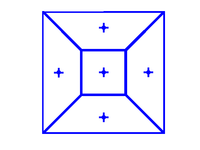}}\hspace{-0.88cm} {\includegraphics[width=0.26\textwidth]{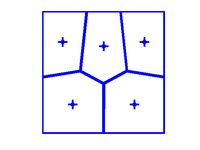}} \hspace{-0.88cm}{\includegraphics[width=0.26\textwidth]{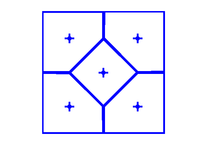}}
        \caption{Left: A Voronoi diagram (the Voronoi regions associated with six generators). Right: Three centroidal Voronoi tessellations with five generators.} 
\label{fig2}
\end{figure}
A centroidal Voronoi tessellation (cf. Figure \ref{fig2} right) 
amounts to finding a placement of the points $y_k$ such that 
they are exactly the centroids of their associated Voronoi region $V_k$.  A variational formulation is based upon minimization of the following nonlocal energy  
\begin{equation}\label{Vor-energy}
E(Y):=\int_Q\dist^2(x,Y)\d x\, =\, \sum_{k=1}^n \int_{V_k} |x-y_k|^2\d x. 
\end{equation}
Criticality of $E$ is exactly the condition that each $y_k$ be the centroid of  its Voronoi region $V_k$, that is 
\[ Y^\ast = \{ y_i^\ast\} \,\,\,\, \hbox{\rm is a critical point of $E$}  \qquad {\rm iff} \qquad 
y_i^\ast \, = \, \int_{V_i} x \, dx, \quad \hbox{\rm the centroid  of $V_i$}. \]
In the context of information theory, the set $Y$ is viewed as a {\it quantizer} to quantize data which is distributed in $Q$ according a  continuous probability density, here taken to be uniformly  distributed across $Q$. The {\it quantization error} is given by $E(Y)$.  The {\it optimal quantizer} is the one with least error, alternatively {\it the CVT with lowest energy} (\ref{Vor-energy}). 

A well-known conjecture attributed to Gersho \cite{Ge} (cf. Conjecture (\ref{Gersho}) (a) below) addresses the periodic nature of the configuration with least error (alternatively, the CVT with lowest energy). 
This conjecture is completely solved in 2D but, to date,  remains open in 3D.
We present a precise statement of Gersho's conjecture (statement (a)) in its augmented form (statement (b)): 

\begin{conj}\label{Gersho}  {\bf The Augmented Gersho's Conjecture} 
\begin{itemize}
\item[(a)] There exists a polytope $V$ with $|V|=1$ which tiles the space with congruent copies
such that the following holds: let $(Y_n)_n$ be a sequence of minimizers, with $Y_n\in\argmin_{\sh Y=n} E(Y)$, then the Voronoi cells  
of points $ Y_n$ are asymptotically congruent to  $n^{-1/N}V$ as $n\to +\8$.\\

\item[(b)]
For dimension $N=2$, the optimal polytope $V$ is a regular hexagon, corresponding to a 
 optimal placement of points on a triangular lattice (cf. Figure $\ref{fig1}$ left). For dimension $N = 3$, the optimal polytope $V$ is the truncated octahedron,  corresponding  an optimal placement of points on a BCC (body centered cubic) lattice   (cf. Figure $\ref{fig1}$ right). 
\end{itemize}
\end{conj}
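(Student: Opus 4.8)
The realistic target here is the \emph{reduction} announced in the abstract rather than a full proof; I would try to reproduce in $\R^3$ the two-step structure that settles the case $N=2$ (Fejes T\'{o}th, and the analytic argument of Gruber). In 2D that argument rests on (i) a \emph{combinatorial bound}: every Voronoi cell of an optimal configuration is a convex polygon with at most six edges; and (ii) a \emph{moment lemma}: among convex polygons of prescribed area with at most six edges, the normalized second moment $\int_V|x-c(V)|^2\d x$ is minimized by the regular hexagon, via a symmetrization/convexity argument. Summing the per-cell inequality from (ii) over the $n$ cells, discarding a negligible boundary layer of cells meeting $\partial Q$, and using $|V_k|\approx 1/n$, one obtains both the sharp value of $\lim_n n^{2/N}\min_{\sh Y=n}E(Y)$ and the asymptotic congruence of the cells to $n^{-1/N}V$. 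So the plan has three steps: replace (i) by a \emph{uniform bound on the geometric complexity} of optimal 3D cells; replace (ii) by a finite-dimensional convex minimization whose minimizer should turn out to be the truncated octahedron; and then assemble the asymptotics as in 2D.

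\textbf{Step 1 (complexity bounds --- the crux).} I would show there is a constant $F_0$, independent of $n$, such that for all large $n$ every Voronoi cell $V_k$ of a minimizer $Y_n\in\argmin_{\sh Y=n}E(Y)$ has at most $F_0$ faces, hence, by Euler's relation, boundedly many edges and vertices. The mechanism: an optimal cell cannot be geometrically degenerate, since a local rearrangement of nearby generators would strictly lower $E$; quantitatively I would bound uniformly in $n$ and $k$ both the ratio $\diam(V_k)/|V_k|^{1/3}$ and the circumradius-to-inradius ratio of $V_k$, using the criticality condition that $y_k$ is the centroid of $V_k$ together with explicit comparison configurations. Uniform roundness plus $|V_k|\approx 1/n$ then caps, by a volume/packing count (the inscribed balls of the Voronoi neighbours of $V_k$ are disjoint and all lie in a ball of comparable radius), the number of neighbours of a cell --- that is, its number of faces.

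\textbf{Step 2 (reduction to a finite convex problem, and the asymptotics).} Once complexity is bounded by $F_0$, the possible limiting cell shapes are exhausted by polytopes drawn from a \emph{finite} list of combinatorial types. For a fixed type, minimizing the normalized second moment over unit-volume realizations is an optimization in finitely many variables (face offsets, or vertex coordinates) which, in appropriate coordinates, can be cast as a convex problem, hence has a well-defined and in principle computable minimum; the best value over the finitely many types isolates a single optimal polytope $V$. The remaining content of Conjecture~\ref{Gersho} then becomes the assertion that this $V$ is the truncated octahedron (the BCC Voronoi cell), which is what the finite --- but possibly astronomically large --- explicit computation is meant to verify. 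With the sharp per-cell lower bound in hand, one finally sums over $k=1,\dots,n$, controls the boundary-layer cells (those with $\dist(y_k,\partial Q)\lesssim n^{-1/N}$), and passes to the limit, reproducing Gruber's 2D endgame verbatim.

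The principal obstacle is Step 1: there is no three-dimensional substitute for ``at most six edges,'' and ruling out optimal cells with arbitrarily many faces requires genuinely new quantitative control on energy-minimizing tessellations --- precisely the kind of bound the abstract advertises. The secondary obstacle is the sheer size of what Step 2 leaves behind: even with $F_0$ made explicit, the number of combinatorial types and the dimensions of the convex subproblems may dwarf what is presently feasible, so the honest conclusion --- and, I expect, the thrust of this paper --- is a reduction of the 3D Gersho conjecture to a finite convex computation rather than its outright resolution.
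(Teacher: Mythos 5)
The statement here is a conjecture, and neither you nor the paper proves it; the paper's actual content is the two ingredients you name, namely the complexity bounds (Theorem \ref{main1}) and the reduction sketched in Section \ref{sec-Gersho}. Your Step 1 matches the paper's mechanism closely: a uniform upper bound on $\diam(V_k)$ of order $n^{-1/3}$ and a uniform lower bound on $|V_k|$ of order $n^{-1}$, both obtained from explicit add-a-generator/remove-a-generator energy comparisons (Lemmas \ref{d} and \ref{below}) rather than from the centroid criticality condition, followed by exactly the volume/packing count you describe, in a ball of radius $3\ggm_4(n-2)^{-1/3}$, yielding $N_\ast\approx 3\times 10^{20}$.

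The genuine gap is in your Step 2. Minimizing the second moment over \emph{all} unit-volume polytopes with at most $N_\ast$ faces and taking the best over the finitely many combinatorial types does not isolate the truncated octahedron: the second moment decreases as more faces are allowed, so with $N_\ast\sim 10^{20}$ faces the optimum is essentially the ball, the resulting per-cell lower bound is the non-sharp spherical one (precisely the paper's estimate \eqref{tau-est}, about half the BCC value), and it is attained by no tessellation. Gruber's endgame requires two further ingredients your plan omits: (a) the combinatorial fact that the \emph{average} number of faces $\overline{m}$ of the cells of a Voronoi tessellation is at most $14$ (the 3D analogue of ``average at most six sides,'' cited in the paper as \cite{14}), and (b) convexity of $G(a,m)$ in the face-number variable $m$ (after a suitable real extension), so that Jensen's inequality gives $\sum_k G(|V_k|,\alpha_k)\ge nG(1/n,\overline{m})+o(n^{-2/3})\ge nG(1/n,14)+o(n^{-2/3})$, with the last bound hopefully realized by a space-tiling $14$-faced polytope. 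The logical role of $N_\ast$ is therefore not to enumerate types for a brute-force minimum, but to make the verification of (b) a finite task (convexity in $m$ need only be checked for $m\le N_\ast$). Without (a) and (b) the finite computation you describe cannot certify the truncated octahedron, and the assembly of the asymptotics fails at exactly the step where the 2D proof uses $\overline{s}\le 6$ together with $G(a,6)\le G(a,m)$ for $m<6$.
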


\begin{figure}
  {\includegraphics[width=0.3\textwidth]{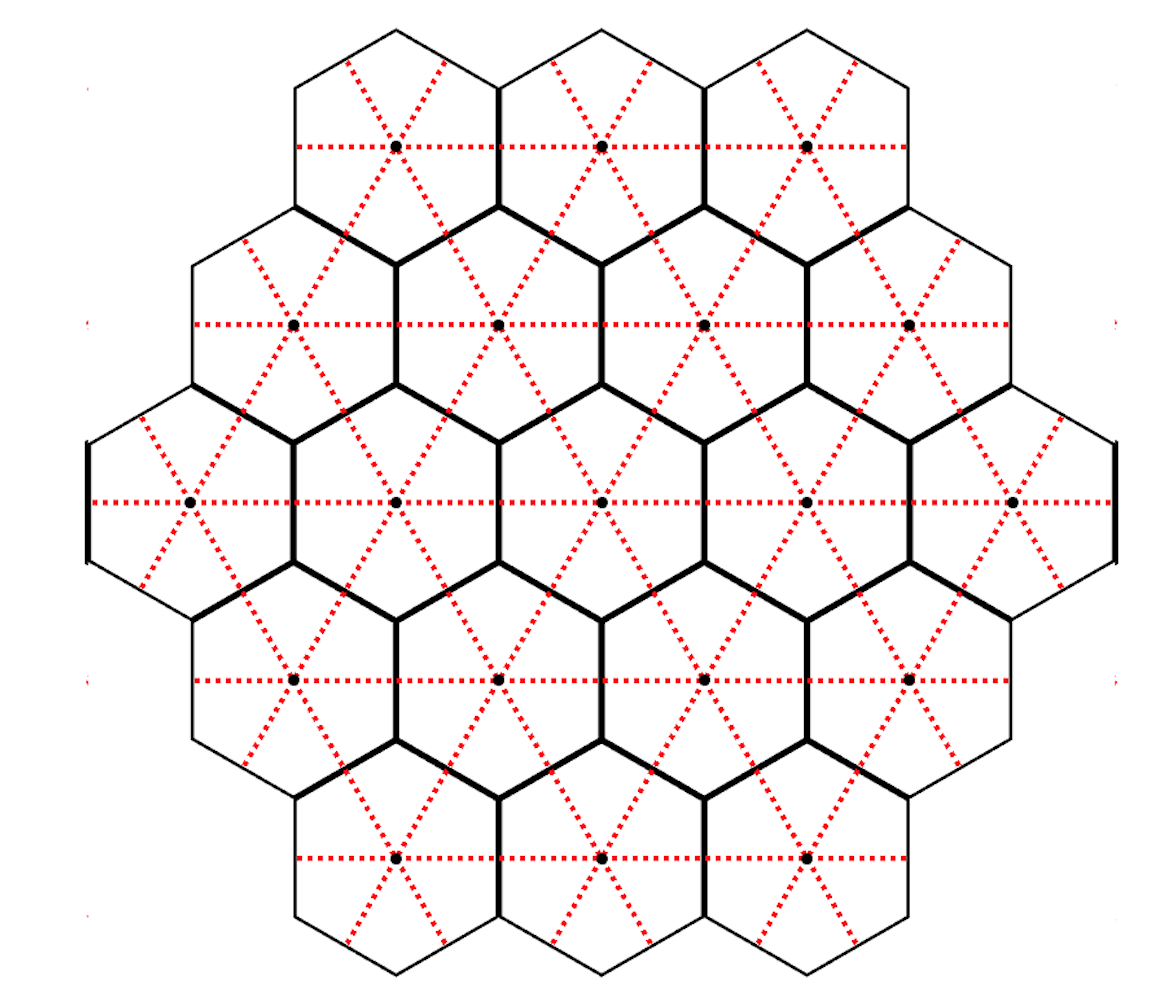}}\qquad \qquad 
    {\includegraphics[width=0.2\textwidth]{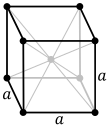}}\,\,\, \,\,\, 
      {\includegraphics[width=0.2\textwidth]{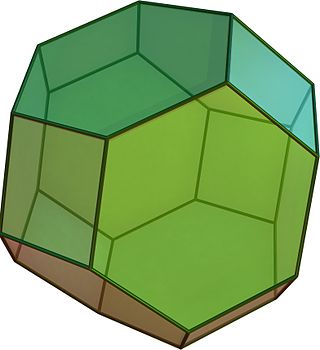}}
      \caption{Left: 2D Optimal placement of points on a triangular  lattice with associated  optimal Voronoi polytope  a regular hexagon. Right: 3D Conjectured optimal  placement of points on a BCC lattice and the associated   optimal Voronoi polytope the
      truncated octahedron (Source: Wikipedia).} 
\label{fig1}
\end{figure}

Conjecture \ref{Gersho} has been proven in 2D where 
hexagonal structures are pervasive\footnote{For example,  Hales' celebrated resolution of the Honeycomb Conjecture in \cite{Ha}.} .  The essential parts of the proof were first presented by Fejes T\'oth \cite{Toth} with later versions given, for example, by Newman \cite{Ne}. However, as noted in \cite{Gr1}, the first complete 2D proof of Gersho's conjecture was given by Gruber. To date, the conjecture remains open in 3D. In 3D,  Barnes and Sloan \cite{BS} have proven the optimality of the BCC configuration amongst all lattice configurations, while Du and Wang \cite{DW} have presented numerical evidence supporting the conjecture. 
The nonlocal and nonconvex character of  (\ref{Vor-energy})  insures a highly nontrivial energy landscape associated with a multitude of critical points with complex, albeit polygonal, Voronoi regions. Moreover, to divorce from boundary/size effects,  one can only address the asymptotics as the number of generators n tends to infinity. 

The purpose of this paper is to present in 3D some quantitative bounds for the geometry of minimizing Voronoi regions (cf. Theorem \ref{main1}). 
To our knowledge, these bounds are new. 
In particular, we prove an upper bound (independent of $n$) on the complexity (number of faces) of an optimal Voronoi cell. 
 This is an important step: 
Indeed, to divorce from boundary/size effects, one can only address the asymptotics as the number of generators $n$ tends to infinity. A priori, we cannot dismiss the possibility that the complexity of the Voronoi cells associated with a CVT  is $O(n)$ as $ n \to \infty$;  
  what we can do is to prove that this is not the case for the {\it optimal} CVT. 
As we explain in Section \ref{sec-Gersho}, we can combine this bound with Gruber's two dimensional approach
to reduce the 3D Gersho conjecture to a finite, albeit large, computation
 of an explicit convex problem in finitely many variables.
 
Remarkably,  the proof of these bounds 
does not rely on any sophisticated mathematical machinery, rather solely on 
elementary estimates with distance functions. Our choice of domain (the unit square $Q$) is for convenience only: the analogous results hold for any finite domain or, for example, the flat torus.

Let us conclude the introduction by noting that Gersho's conjecture is related to a fundamental, largely open, question in 
condensed matter physics. 
The {\it Crystallization Conjecture}  roughly states that within the confinements of some physical domain, $n$ interacting particles arrange themselves into a periodic configuration. Precisely, 
 let $\Omega$ be a domain in $\R^N$, $ Y=\{y_1,\cdots,y_n\}$
a collection of $n$ points in $\Omega$,  and define 
\begin{equation}\label{Pot-energy}
 F_{\mathcal V} (Y) \, : =\,  \sum_{i,j = 1}^n {\mathcal V} (|y_i - y_j|), \end{equation}
where ${\mathcal V}$ is the interaction potential. In this respect, the crystallization
 conjecture asserts that as $n \to \infty$, the minimizers  of $F_n$ over all possible points  
$y_1, \dots, y_n \in \Omega$ arrange themselves in a periodic lattice. 
Typical physical interaction potentials, for example the Lennard-Jones potential,  have the property that they are repulsive at short distances and attractive at large.  
To dispense with boundary effects, it is necessary to pose the problem as an asymptotic statement as the size of the domain get larger. Upon rescaling, this is equivalent to letting the number of particles 
 $n \to \infty$.  The crystallization conjecture remains  one of the most fundamental and difficult problems in mathematical physics with rigorous results far and few (see, for example, \cite{BL, Ra, HR, Theil, EL, FT, FES}).  
As noted in \cite {HR}, there is a direct link between the crystallization conjecture in 3D and sphere packing problems (cf.  \cite{Via8, Via24} for recent new developments.) 

The relationship of our purely geometric variational problem  (\ref{Vor-energy}) 
to the ubiquitous class (\ref{Pot-energy}) is not immediate; in the former, the points do interact with each other but implicitly, via the distance function (equivalently via the associated Voronoi regions). 
While there is no explicit effective interaction potential ${\mathcal V}$, one can reformulate the energy $E$ in terms of the {\it Wasserstein-2 distance} $W_2$ (cf. \cite{Vi}) between a weighed sum of delta functions and  Lebesgue measure $\mathcal{L}_N$: 
\[ E(Y) \, = \, W_2 \left(\sum_{i = 1}^n |V_i| \delta_{y_i} \, , \, \mathcal{L}_N \right)^2, \]
where  $\delta_{y_i}$ denotes the delta function with concentration at $y_i$. 
In other words, the quantization error is precisely the  squared Wasserstein-2 distance distance between the weighted point quantities and the continuous probability density. Such {\it semi-discrete} optimal transportation problems have recently been studied in \cite{BJW}. 

 In our opinion, the optimal CVT problem is the simplest setting to prove 3D crystallization because: 
\medskip
\begin{itemize}
\item there is a simple and elegant characterization of criticality (critical points); 
\medskip
\item working solely with distance functions facilitates 
the  proof of  estimates and quantitative bounds for optimal configurations entirely in terms of their convex polygonal Voronoi regions.  In particular,  the energy  (\ref{Vor-energy})  has a pseudo-local 
character which means that one can readily estimate the total energy  loss  resulting from the addition of a new generator in a fixed Voronoi cell (cf. Lemma \ref{d}).
\end{itemize}
\medskip

\section{Optimal CVT and Gersho's conjecture: Previous results and the statement of our main theorem}

\subsection{Gruber's Approach in Two Dimensions}

In \cite{Gr1}, Gruber presented an elementary  proof in 2D of Gersho's conjecture. For convenience, he took the  domain $\Omega$ to be a suitably-chosen regular $n$-gon; however, one can work on an arbitrary domain at the expense of smaller-order boundary errors. His argument is as follows:  
\begin{enumerate}
 \item[(i)] First, it is shown that the functional
 \begin{equation*}
  G(a,m):=\min_{\substack{A \text{ is an } \\ m\text{-gon with area } a}} \,\,\, \int_A |x-y|^2 \d x,\qquad y=\text{centroid of } A
 \end{equation*}
is convex in both variables.  Here, one first shows that the minimum is attained on regular polygons. Then, via a direct Hessian computation, it is shown that there exists an 
 an extension of  $G$, say   $\tilde{G}$, whose second argument is defined over the positive real numbers, which is convex in both variables.

 \item[(ii)] Second, it is shown that given a Voronoi tessellation $\{V_i\}_{i=1}^n$, the average number of sides
 is at most $6$: let $E(F)$ be the number of sides of the face $F$, and by double counting (each side belongs to exactly 2 faces) we get
$ \sum_{\{F \text{ faces}\}} E(F)=2e\le 6n-12, $
 where $e$ is the total number of sides, and $2e\le 6n-12$ comes from Euler's formula for polytopes. 
 Moreover, it is easy to check that 
 \[G(a,6)\le \min\{G(a,3),G(a,4),G(a,5)\}\]
 for all $a\ge 0$, by directly computing the values of $G(a,6), G(a,3),G(a,4),G(a,5)$ on regular $3,4,5,6$-gons.
  
 \item[(iii)] With these steps in hand, one proceeds as follows.   Let $\{V_i\}_{i=1}^n$ be an arbitrary Voronoi tessellation and  denote: by $s_i$ the number of sides
 of $V_i$,  by $a_i$ its area, and 
 \[ \-a:=\frac1n\sum_{i=1}^n a_i, \qquad \,\,\, 
  \ \-s:=\frac1n\sum_{i=1}^n s_i. \]
The convexity of $G$ then implies that 
 \begin{align*}
  \sum_{i=1}^n \int_{V_i} |x-y_i|^2\d x &\ge \, \sum_{i=1}^n G(a_i,s_i) \\
  &\ge\, nG(\-a,\-s) + o(n)\\
  & \ge\,  nG(\-a , 6) + o(n),
 \end{align*}
where $o(n)$ is the contribution of the boundary terms, which vanish as $n\to +\8$. The last inequality shows that
the hexagonal partition is optimal.
\end{enumerate}

\bigskip

 The fundamental difficulty of applying Gruber's arguments in 3D case is establishing the convexity in $m$ of 
\begin{equation*}
  G(a,m)\,:=\, \min_{\substack{V \text{ convex polytope, } |V|=a\\
V \text{ has at most } m \text{ faces }  }} \,\,\, \,  \int_V |x-y|^2 \d x,\qquad y=\text{ centroid of } V.
 \end{equation*}
We do not have regular $m$-hedron in 3D,
and computations are unfeasible. A priori, the maximum number of possible faces of 
the Voronoi polygons associated with a critical point can grow with $n$. One of the main results of this paper is to prove  (cf. Theorem \ref{main1}) upper bounds on the geometric complexity (including the number of 
faces) of such polygons which are independent of $n$. With such bounds in place, one could, in principle, have the computer verify the convexity of $G(a,m)$. As we explain in the last section (Section \ref{sec-Gersho}), this would then prove Gersho's conjecture in 3D. 

 Perhaps a deeper reason for the significantly increased difficulty in proving Gersho's in 3D, compared to
2D, is due to the fact that we do not expect the presence of a {\em universally optimal} configuration
(\cite[Definition~1.3]{Via uni}) in 3D. This is in stark contrast with the 2D case, where the triangular lattice
is almost surely to be universally optimal (cf. \cite{Via uni}), although no rigorous proof is available. Gersho's
conjecture would not be the first one in which such issue appears: it is well known that the solution
to the optimal foam problem in 2D is given the honeycomb structure, whose barycenters lie on the triangular lattice, while in 3D this is still open, and the long conjectured solution, i.e. the bitruncated cubic honeycomb, is surely not optimal, as it has higher energy than the Weaire-Phelan structure (cf. \cite{WP}).

Before presenting our results, let us document two known results in 3D. 
\medskip
%

\subsection{Two Previous Results in Three Dimensions}

 \begin{thm}\label{Gruber1}
 {\bf (Gruber's Theorem 2 in  \cite{Gr2}.)}
Let  $(Y_n)_n$ be a sequence of minimizers, i.e. 
$(Y_n)_n$, with $Y_n\in\argmin_{\sh Y=n}E(Y)$. \\
\begin{enumerate}
\item Then for some positive integer $n_0$, if $n >n_0$ 
there exists $\bt>1$ such that $Y_n$ is a $((1/\bt) n^{-1/3},  n^{-1/3})$-Delone set, i.e.,
 $$ n^{-1/3}\ge\min_{y,y'\in Y_n,\ y\neq y'}  |y-y'|   \ge (1/\bt) n^{-1/3}.$$

\item $Y_n$ is uniformly distributed in $Q$, i.e.
\begin{equation*}
\sh(K\cap Y_n)=|K| n + o(n)\qquad \text{as } n\ra +\8 
\end{equation*}
for any Jordan measurable set $K\sse Q$.
\end{enumerate}
\end{thm}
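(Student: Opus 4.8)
\medskip
\noindent\textbf{Proof strategy.} The plan is to derive both assertions from three ingredients. First, the two-sided energy bound $E(Y_n)\asymp n^{-2/3}$: the upper bound follows by testing $E$ against a uniform cubic array of $\approx n$ points (whose cells are cubes of side $\approx n^{-1/3}$, of moment $\Theta(n^{-5/3})$), and the lower bound because the ball minimises the second moment among bodies of prescribed volume, so $\int_{V_i}|x-y_i|^2\,\d x\ge c\,|V_i|^{5/3}$ and hence, by convexity of $t\mapsto t^{5/3}$ and $\sum_i|V_i|=1$, $E(Y_n)=\sum_i\int_{V_i}|x-y_i|^2\,\d x\ge c\,n^{-2/3}$. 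Second, a \emph{covering-radius bound}: $\dist(x,Y_n)\le C n^{-1/3}$ for every $x\in Q$; equivalently each Voronoi cell satisfies $V_i\sse B(y_i,Cn^{-1/3})$, so $|V_i|\lesssim n^{-1}$ and $\int_{V_i}|x-y_i|^2\,\d x\lesssim n^{-5/3}$. Third, two pseudo-local modification estimates: inserting one new generator inside a cell $V$ lowers $E$ by at least a fixed fraction of $\int_V|x-y|^2\,\d x$ (this is the content of Lemma~\ref{d}), while deleting a generator $y_j$ raises $E$ by at most $2d_j\sqrt{|V_j|\int_{V_j}|x-y_j|^2\,\d x}+d_j^2|V_j|$, where $d_j:=\dist(y_j,Y_n\setminus\{y_j\})$, since after the deletion each $x\in V_j$ is served by a generator at distance $\le|x-y_j|+d_j$.

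The covering-radius bound is the heart of the proof and I would establish it by bootstrapping. If $\dist(p,Y_n)=\rho$ for some $p\in Q$, then the ball of radius $\rho/3$ about $p$ lies at distance $\ge 2\rho/3$ from $Y_n$, so $E(Y_n)\gtrsim\rho^5$; with the energy bound this gives the crude estimate $\rho\lesssim n^{-2/15}$. Now suppose it is already known that $\dist(\cdot,Y_n)\le C'n^{-\al}$ for some $\al<1/3$. Then $|V_j|\lesssim n^{-3\al}$, $\int_{V_j}|x-y_j|^2\,\d x\lesssim n^{-5\al}$, and $d_j\le 2C'n^{-\al}$ for all $j$; summing the deletion estimate over $j$ and using Cauchy--Schwarz with $\sum_j|V_j|=1$ and $\sum_j\int_{V_j}|x-y_j|^2\,\d x\lesssim n^{-2/3}$ shows that some generator of $Y_n$ (in fact of $Y_n\cup\{p\}$) can be removed at cost $\lesssim n^{-1-2\al}$. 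If $\dist(\cdot,Y_n)$ exceeded $\mu n^{-(1+2\al)/5}$, then inserting a generator at the deep point $p$ — which lowers $E$ by at least a universal constant times $\rho^5\gtrsim n^{-1-2\al}$, hence by more than the removal cost once $\mu$ is large — and deleting the cheap generator produces a configuration of $n$ points with strictly smaller energy, contradicting $Y_n\in\argmin_{\sh Y=n}E(Y)$. Thus the exponent improves from $\al$ to $(1+2\al)/5$; iterating drives it to the fixed point $1/3$, and one final pass on the residual error exponent (the constants obey a contracting recursion, so they stay uniformly bounded) upgrades the conclusion to the sharp $\dist(\cdot,Y_n)\le Cn^{-1/3}$. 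I expect this iteration, and in particular keeping the constants under control so as to close exactly at exponent $1/3$, to be the main obstacle.

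Granting the covering bound, statement (1) is short. The upper bound $\min_{y\neq y'}|y-y'|\le C n^{-1/3}$ is pure packing: the balls $B(y,\tfrac12\min|y-y'|)$, $y\in Y_n$, are pairwise disjoint and lie in the $\tfrac12 n^{-1/3}$-neighbourhood of $Q$. For the lower bound, suppose $|y_i-y_j|=\dt$ with, say, $\int_{V_j}|x-y_j|^2\,\d x\le\int_{V_i}|x-y_i|^2\,\d x$. Deleting $y_j$ raises $E$ by at most $2\dt\sqrt{|V_j|\int_{V_j}|x-y_j|^2\,\d x}+\dt^2|V_j|\lesssim\dt\,n^{-4/3}+\dt^2 n^{-1}$ by the covering bound. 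Reinserting one generator into the highest-energy cell of the remaining configuration — whose moment is $\ge E(Y_n)/(n-1)\gtrsim n^{-5/3}$ — lowers $E$ by $\gtrsim n^{-5/3}$ via Lemma~\ref{d}. Hence if $\dt\le\vep n^{-1/3}$ with $\vep$ small enough (depending only on the constants above), this two-step modification produces a competitor with $n$ points and strictly smaller energy, contradicting minimality; therefore $\min_{y\neq y'}|y-y'|\ge(1/\bt)n^{-1/3}$ with $\bt:=1/\vep$.

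For statement (2), fix a cube $K\sse Q$ and put $m:=\sh(K\cap Y_n)$. Shrinking $K$ to a concentric cube $K'$ with $\dist(K',Q\setminus K)>Cn^{-1/3}$ (and $|K\setminus K'|$ arbitrarily small), the covering bound forces every point of $K'$ to be served by a generator in $K$, so $\int_{K'}\dist^2(x,Y_n)\,\d x\ge e(K',m)$, where $e(A,k):=\min_{\sh S=k}\int_A\dist^2(x,S)\,\d x$. Using the classical asymptotics $e(A,k)=\kappa\,|A|^{5/3}k^{-2/3}(1+o_k(1))$, valid for any Jordan domain $A$ with a universal constant $\kappa$ (which one recovers here by a subdivision/subadditivity argument together with the scaling of $e$), and the matching upper bound $E(Y_n)\le\sum_i e(K_i,n/M^3)=\kappa\,n^{-2/3}(1+o(1))$ obtained by partitioning $Q$ into $M^3$ congruent subcubes $K_i$ and placing $n/M^3$ generators in each, the convexity inequality $\sum_i m_i^{-2/3}\ge M^5 n^{-2/3}$ (where $m_i:=\sh(K_i\cap Y_n)$, $\sum_i m_i=n$) must be nearly an equality; strict convexity of $t\mapsto t^{-2/3}$ then yields $m_i=\tfrac{n}{M^3}(1+o(1))$ for each $i$, hence $\sh(K\cap Y_n)=|K|\,n+o(n)$ for dyadic cubes $K$. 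An arbitrary Jordan measurable $K$ is then squeezed between finite unions of such cubes up to volume $\eta$, and the generators in the symmetric difference number $\le C\eta\,n+o(n)$ since, by statement (1), each cell has volume $\gtrsim n^{-1}$ and so any set of volume $\eta$ meets $\lesssim\eta\,n$ cells; letting $\eta\to 0$ finishes the proof.
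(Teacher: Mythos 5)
Note first that the paper does not prove this theorem: it is imported verbatim as ``Gruber's Theorem 2 in \cite{Gr2}'' in the subsection on previous results, so there is no in-paper proof to compare against. Your proposal is, however, an essentially correct self-contained route, and it runs on exactly the machinery the paper develops for its \emph{new} results: your ``insertion gain'' is Lemma \ref{d} (indeed $\frac{2^2\cdot 3^3}{5^2\cdot 10^3}r^2|V|\ge \frac{2^2\cdot 3^3}{5^2\cdot 10^3}\int_V|x-y|^2\d x$, so the gain is a fixed fraction of the cell's moment, as you claim), your deletion estimate is the same triangle-inequality computation used in the paper's proofs of \eqref{cor-diam-low}, \eqref{cor-diam-high} and of Lemma \ref{below}, and your lower bound for $\min|y-y'|$ is essentially Lemma \ref{below} made global via the covering-radius bound. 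Your bootstrap for the covering radius is sound: the exponent map $\al\mapsto(1+2\al)/5$ and the constant map $C\mapsto (8C^2/c_0)^{1/5}$ are both contractions, and since for each fixed $n$ the bound holds for every iterate $k$, passing to the limit in $k$ does close the argument at the sharp exponent $1/3$ with a uniform constant, exactly as you anticipate. The equidistribution argument via Jensen's inequality applied to $\sum_i m_i^{-2/3}$ over a subcube partition, with the shrunken cubes $K_i'$ to decouple the cells across boundaries, is the standard (and correct) proof; it does lean on Zador's asymptotics for subdomains, which is itself a quoted result (Theorem \ref{Zador}), but there is no circularity since that theorem has an independent subadditivity proof, sketched in the paper's Section 4.

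Two caveats worth recording. First, your packing argument only yields $\min_{y\ne y'}|y-y'|\le C\,n^{-1/3}$ with $C=2\omega_3^{-1/3}\approx 1.24$, not the literal constant $1$ in the statement; but the constant $1$ as transcribed cannot be right anyway (the BCC configuration at density $n$ has nearest-neighbour distance $\approx 1.09\,n^{-1/3}$) --- in Gruber's formulation the second Delone parameter bounds the \emph{covering radius}, which is precisely the quantity your bootstrap controls, and then $\min|y-y'|\le 2R$ follows. Second, when you reinsert a generator into the highest-energy cell of $Y_n\setminus\{y_j\}$ you should note that Lemma \ref{d} requires the generator to be interior to its cell; this holds because it holds for the minimizer $Y_n$ (Step 1 of the proof of Lemma \ref{below}) and cells only grow upon deletion. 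Neither point is a genuine gap.
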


\medskip

 \begin{thm}\label{Zador}
{\bf  (Zador's uniform energy formula in \cite{Zador}, 3D case.)}  
  There exists some constant $\tau>0$ such that given any sequence 
 $Y_n\in\argmin_{\sh Y=n}E(Y)$, we have \[ n^{2/3}E(Y_n)\,\, \to\,\, \tau.\] 
\end{thm}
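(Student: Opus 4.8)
The plan is to prove this — which is the uniform-density, $N=3$ case of Zador's classical asymptotic formula for the quantization error — by the standard Fekete-type subadditivity argument, using only the exact scaling of $E$ and the trivial monotonicity of the optimal error in the number of generators. Write $e_n:=\min_{\sh Y=n}E(Y)=E(Y_n)$ for the unit cube $Q=[0,1]^3$. The key structural fact is the scaling law: for a cube $C$ of side length $L$, the optimal $n$-point quantization error on $C$ equals $L^{5}e_n$; indeed, substituting $x=Lz$ in $\int_C\dist^2(x,Y)\d x$ produces a factor $L^2$ from the distance and $L^3$ from the volume element. Also $e_n$ is non-increasing in $n$, since inserting an extra generator can only decrease $\dist(\cdot,Y)$ pointwise.

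First I would prove the matching upper bound. Fix $m\ge1$, partition $Q$ into $k^3$ congruent subcubes of side $1/k$, and place in each subcube a scaled-down, translated copy of a minimizing $m$-point configuration for $Q$; bounding the distance to this $mk^3$-point set by the distance to the generators in the same subcube gives
\[
e_{mk^3}\ \le\ k^3\cdot k^{-5}e_m\ =\ k^{-2}e_m,\qquad\text{so}\qquad (mk^3)^{2/3}e_{mk^3}\ \le\ m^{2/3}e_m .
\]
For arbitrary large $n$, choose $k=\lfloor (n/m)^{1/3}\rfloor$, so that $mk^3\le n< m(k+1)^3$, and use monotonicity:
\[
n^{2/3}e_n\ \le\ n^{2/3}e_{mk^3}\ \le\ m^{2/3}\,\big(1+1/k\big)^{2}\,e_m .
\]
Letting $n\to\8$ (hence $k\to\8$) gives $\limsup_n n^{2/3}e_n\le m^{2/3}e_m$ for every $m$, so $\limsup_n n^{2/3}e_n\le\tau$, where $\tau:=\inf_{m\ge1}m^{2/3}e_m$. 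Since trivially $\liminf_n n^{2/3}e_n\ge\inf_n n^{2/3}e_n=\tau$, the two bounds coincide and $n^{2/3}E(Y_n)=n^{2/3}e_n\to\tau$.

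It remains to check $\tau>0$. For any $n$-point configuration $Y$ with Voronoi cells $\{V_k\}$ one has $\int_{V_k}|x-y_k|^2\d x\ge c_3\,|V_k|^{5/3}$ for a dimensional constant $c_3>0$: this is the elementary fact that, among bodies of prescribed volume, the centered ball minimizes the second moment about the centroid, which follows from a one-line rearrangement (replace the mass on $V_k\setminus B$ by that on $B\setminus V_k$, where $B$ is the centered ball of volume $|V_k|$, noting $|x|^2$ is $\ge$ the squared radius on the former and $\le$ it on the latter). Summing over $k$ and applying Jensen's inequality for $t\mapsto t^{5/3}$ together with $\sum_k|V_k|=1$ yields $E(Y)\ge c_3\sum_k|V_k|^{5/3}\ge c_3\,n^{-2/3}$, so $n^{2/3}e_n\ge c_3$ for all $n$, hence $\tau\ge c_3>0$.

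I do not expect a genuine obstacle: the argument is entirely elementary, and the only points needing mild care are the bookkeeping when $n$ is not an exact multiple of a perfect cube (handled by monotonicity of $e_n$) and the moment-of-inertia inequality used for positivity. An alternative would be to combine the Delone and equidistribution estimates of Theorem \ref{Gruber1} with a blow-up/compactness argument, but the subadditivity proof above is shorter and self-contained.
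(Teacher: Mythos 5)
Your argument is correct. The paper itself does not prove this theorem from scratch (it is quoted from Zador), but the substance of your proof does appear in the paper, split across Section 4: existence of the limit is the content of Claim~1 in the proof of \eqref{en}, and positivity of $\tau$ is the content of the proof of \eqref{tau-est} via Lemma \ref{sphere}. The key idea --- the exact scaling $E(cZ)=c^5E(Z)$ combined with tiling a cube by scaled copies of a good configuration --- is the same in both treatments, but the executions differ in a way worth noting. The paper argues by contradiction ($\limsup>\liminf$ is assumed and refuted), tiles only the shrunken cube $\frac{\lfloor(n/k)^{1/3}\rfloor}{(n/k)^{1/3}}Q$ with copies of a near-optimal $Z$, and must then estimate the energy of the leftover boundary slab (the $54k/n$ term). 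Your direct Fekete-type route partitions $Q$ \emph{exactly} into $k^3$ subcubes, so no boundary layer appears, and the mismatch between $n$ and $mk^3$ is absorbed by the monotonicity of $e_n$; this is cleaner, identifies the limit explicitly as $\tau=\inf_m m^{2/3}e_m$, and hands you the paper's inequality \eqref{en} (that $n^{2/3}E(Y_n)\ge\tau$ for every $n$, which the paper proves separately as Claim~2) as an immediate byproduct. Your positivity step is the same rearrangement-to-a-ball bound plus Jensen that the paper uses for \eqref{tau-est}; you do not need the sharp constant, only $c_3>0$, so the statement is fully proved.
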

Zador's result has been extended by Gruber in the general setting of manifolds \cite {Gr2}.
%
However, to our knowledge, no further description of the geometry of Voronoi cells has been proven, nor any explicit
lower bounds on $\tau$. 

\medskip

\subsection{The Statement of Our Results}

 For the remainder of this article we assume the space dimension $N=3$. 

\begin{thm}\label{main1}
Let  $n \in \mathbb{N}$ and $Y_n$ be a  minimizer of  $(\ref{Vor-energy})$. Then for any $y\in Y_n$, with $V$ denoting its Voronoi cell, we have:\\
(i) There exists constants $\ggm_1, \dots, \ggm_5$ (independent of $n$)  such that 
 \begin{align}
\diam(V)&\ge \ggm_3n^{-1/3}, \label{cor-diam-low}\\
|V|&\ge \omega_3 \ggm_5^3n^{-1},\label{cor-vol-low}\\
\diam(V)&\le \ggm_4(n-2)^{-1/3},\label{cor-diam-high}\\
V \text{ has at most }  N_\ast &:= 2(3\ggm_4/\ggm_5)^3 \text{ faces}\label{cor-f},
\end{align}
where $\omega_3:=4\pi/3$, and
\begin{align*}
\ggm_1&:=(2/5)^{2/3}/40\approx 0.013572,
\qquad \ggm_3:=\omega_3^{-1/5}\ggm_1^{1/5}\approx 0.317769,\\
\ggm_5&:=\frac14\bigg(\sqrt{1+\frac{2^4\cdot 3^3}{5^2\cdot 10^3}}-1\bigg)\ggm_3 \approx 0.000451,\\
\ggm_4&:= \frac{2\cdot 12^{1/4}(16)^{1/3} }{\pi^{1/4}\omega_3^{1/12}}
\bigg(\sqrt{1+\frac{2^4\cdot 3^3}{5^2\cdot 10^3}}-1\bigg)^{-1/2}\bigg(\frac{5^2\cdot 10^3}{2^2\cdot 3^3}\bigg)^{1/4} \approx 333.18 \qquad \qquad  N_\ast \approx 2.94 \times 10^{20}.
\end{align*}

(ii) Let $\tau$ be the constant in  Zador's asymptotic estimate (cf. Theorem $\ref{Zador}$), that is,  
  \[ n^{2/3}E(Y_n)\to\tau.\]
  Then we have 
  \begin{equation}\label{en}
n^{2/3}E(Y_n)\ge \tau \qquad \fal n\gg1
\end{equation}
with 
\begin{equation}\label{tau-est}
\tau\ge \frac{2\pi}{5}\omega_3^{-5/3}\approx 0.11545.
\end{equation}
%
%
\end{thm}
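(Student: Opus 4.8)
The plan is to prove all the bounds by a single bootstrapping argument built on one elementary inequality: \emph{adding a new generator inside a fixed Voronoi cell $V$ decreases the energy by a controlled amount} (this is the pseudo-local character alluded to in the introduction, to be isolated in Lemma \ref{d}). Combined with Zador's formula (Theorem \ref{Zador}) and Gruber's Delone/uniform-distribution estimates (Theorem \ref{Gruber1}), this will pin down, in sequence, a volume upper bound, a diameter upper bound, a volume lower bound, and finally a face count. First I would record the two trivial building blocks. (1) If $V$ is any convex body with centroid $y$, then a ball comparison gives $\int_V|x-y|^2\d x\ge c_N|V|^{1+2/N}$ with the constant attained by a ball; in $3$D this is the $\ggm_1$-type inequality $\int_V|x-y|^2\d x\ge \ggm_1 |V|^{5/3}$ after optimizing constants. (2) If $y'$ is added anywhere in $V$, the new contribution of the region near $y'$ is at most $\int_{B}|x-y'|^2\d x$ over a suitable ball $B\subset V$ of volume $\ge\delta|V|$, so the energy \emph{drop} is at least a fixed fraction of $\int_V|x-y|^2\d x$, and hence (by (1)) at least a constant times $|V|^{5/3}$.

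Next I would run the comparison against a minimizer. Fix the minimizer $Y_n$ and a cell $V=V_i$. By (2), $E(Y_n)-E(Y_n\cup\{y'\})\ge \ggm_1|V|^{5/3}$ (up to the explicit fraction, which determines $\ggm_5$). But $E(Y_n\cup\{y'\})\ge \min_{\sh Y=n+1}E(Y)=E(Y_{n+1})$, and Zador gives $E(Y_n)=\tau n^{-2/3}+o(n^{-2/3})$, $E(Y_{n+1})=\tau(n+1)^{-2/3}+o(n^{-2/3})$, whose difference is $\sim \tfrac{2\tau}{3}n^{-5/3}$. Therefore $|V|^{5/3}\le C n^{-5/3}$ up to lower-order terms, i.e. $|V|\le \ggm_4' n^{-1}$, which is the volume upper bound; feeding this into the isodiametric/volume relation for convex bodies together with the Delone lower bound on inter-generator distances (Theorem \ref{Gruber1}(1)) upgrades it to the diameter upper bound \eqref{cor-diam-high}, with the $(n-2)$ shift coming from a clean counting inequality (a convex polytope with $F$ faces and the cell sitting among $n$ generators). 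For the lower bounds \eqref{cor-diam-low}, \eqref{cor-vol-low}: summing $\int_{V_i}|x-y_i|^2\d x\ge\ggm_1|V_i|^{5/3}$ over $i$, using $\sum|V_i|=1$ and Jensen (convexity of $t\mapsto t^{5/3}$) in the \emph{reverse} direction via Zador's value of $\tau$, forces each individual $|V_i|$ to be bounded below — otherwise the energy would exceed $\tau n^{-2/3}$; concretely, if some $|V_i|$ were too small, the ball inequality applied to the complementary mass yields a contradiction with \eqref{tau-est}. The diameter lower bound then follows from $\diam(V)\ge (6|V|/\omega_3)^{1/3}$-type reasoning combined with the volume lower bound.

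For the face count \eqref{cor-f}, the key geometric observation is that each face of $V=V_i$ is shared with a neighboring cell $V_j$, whose generator $y_j$ satisfies $|y_i-y_j|\le 2\diam(V)\le 2\ggm_4(n-2)^{-1/3}$ by \eqref{cor-diam-high}; hence all such neighbors lie in a ball of radius $2\ggm_4(n-2)^{-1/3}$ around $y_i$. On the other hand, the Delone property and the volume lower bound \eqref{cor-vol-low} imply the $y_j$'s are $\ggm_5 n^{-1/3}$-separated, so a packing bound caps their number: at most $\big(2\ggm_4(n-2)^{-1/3}/(\ggm_5 n^{-1/3})\big)^3\cdot(\text{const})\le 2(3\ggm_4/\ggm_5)^3=N_\ast$ of them, whence at most $N_\ast$ faces (absorbing the $(n-2)$ vs $n$ discrepancy and the packing constant into the factor). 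Finally, part (ii): the energy comparison $E(Y_{n+1})\le E(Y_n\cup\{y'\})\le E(Y_n)-\ggm_1|V|^{5/3}$ together with $|V|\ge \omega_3\ggm_5^3 n^{-1}$ gives $E(Y_{n+1})\le E(Y_n)-c n^{-5/3}$; iterating/telescoping and comparing with the known limit yields $n^{2/3}E(Y_n)\ge\tau$ for large $n$, and chaining the ball inequality with $\sum|V_i|=1$ produces the explicit lower bound $\tau\ge \tfrac{2\pi}{5}\omega_3^{-5/3}$.

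The main obstacle I anticipate is making the "energy drop from adding a generator" estimate both \emph{sharp enough} and \emph{clean enough}: one must choose the inscribed ball $B\subset V$ on which to place and estimate the new generator so that its radius is a genuine fixed fraction of $\diam(V)$ even for very eccentric convex polytopes, and one must control the \emph{interaction} — adding $y'$ also perturbs the Voronoi regions of neighbors, so the naive "the drop is at least $\int_B|x-y'|^2$" needs the observation that reassigning mass to $y'$ can only help. Getting the constants $\ggm_1,\ggm_5,\ggm_4$ to come out as the stated explicit numbers (rather than unspecified constants) is the bookkeeping price; the conceptual content is entirely in Lemma \ref{d} and the ball comparison, everything else is packing and Zador.
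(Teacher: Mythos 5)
Your proposal correctly identifies the engine of the whole argument (the paper's Lemma \ref{d}: inserting a generator into a convex cell $V$ drops the energy by at least $\max\{c\,r^2|V|,\gamma_1|V|^{5/3}\}$), and your face-count argument and your derivation of $\tau\ge\frac{2\pi}{5}\omega_3^{-5/3}$ (ball comparison plus Jensen) coincide with the paper's. But the bootstrapping mechanism you build on top of Lemma \ref{d} has a genuine gap: you propose to bound $|V|^{5/3}\le E(Y_n)-E(Y_{n+1})$ and then evaluate the right-hand side from Zador's formula as $\sim\frac{2\tau}{3}n^{-5/3}$. Zador's theorem is a soft asymptotic with no rate: writing $E(Y_n)=\tau n^{-2/3}+\epsilon_n$ with $\epsilon_n=o(n^{-2/3})$, the consecutive difference is $\frac{2\tau}{3}n^{-5/3}+(\epsilon_n-\epsilon_{n+1})$, and the error term is only $o(n^{-2/3})$ --- three orders larger than the main term you need. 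So this route yields at best $|V|=o(n^{-2/5})$, not $O(n^{-1})$, and certainly not an explicit constant. The same defect kills your proposed volume lower bound: a single degenerate cell raises the energy by only $O(n^{-5/3})$ above the optimum, which is invisible to Zador (and, worse, your Jensen floor $\frac{2\pi}{5}\omega_3^{-5/3}n^{-2/3}$ sits a factor $\approx 2$ below $\tau n^{-2/3}$, so no contradiction arises even in principle). Relying on Theorem \ref{Gruber1}'s Delone property has the same problem: $\beta$ there is not explicit, so it cannot produce the stated $\gamma_i$.

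The paper avoids all comparisons across different $n$. Every bound comes from a swap \emph{within the same minimizer} $Y_n$: by pigeonhole there is a cell that is provably cheap to delete (volume $\le 2/(n-2)$ \emph{and} nearest-neighbor distance $\le(16/(\omega_3(n-2)))^{1/3}$, both needed so that the deletion cost is $O(n^{-4/3})\cdot\diam$), one deletes that generator and inserts a new one in the cell under scrutiny, and minimality of $Y_n$ gives a fully explicit inequality --- no Zador, no $o(\cdot)$ terms. The quantitative nearest-neighbor bound you need for the volume lower bound and for $|V_1|\gtrsim r_1^3$ is itself proved this way (Lemma \ref{below}), not imported from Gruber. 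Finally, for \eqref{en} your telescoping of $E(Y_{n+1})\le E(Y_n)-cn^{-5/3}$ gives $n^{2/3}E(Y_n)\ge\frac{3c}{2}$ with $c=\gamma_1(\omega_3\gamma_5^3)^{5/3}$, which is a (tiny) explicit constant and not $\tau$; the paper instead proves $n^{2/3}E(Y_n)\ge\tau$ by a tiling/superadditivity argument (if some $Y_k$ beat $\tau$, stack $n^3$ rescaled copies of it in $Q$ to contradict the definition of $\tau$ as the limit). You would need to replace your cross-$n$ comparisons by such same-$n$ swaps and the scaling argument for the proof to close.
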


\bigskip
The lower bound on $\tau$ given in \eqref{tau-est} is approximately half the energy density of
the BCC lattice ($\approx 0.23562$),
the conjectured asymptotically optimal configuration.
%
%
%
%
%
%
%
%
%
%
%
The proofs of the statements comprising Theorem \ref{main1} are presented in Sections \ref{proofs1}-\ref{S3}. \\

{\bf Remark:} While we state and proof Theorem \ref{main1} in three dimensions, our proofs work in {\it any} space dimension, with appropriate adjustments for the constants. 

\section{The proof of  Theorem \ref{main1}{\it (i)}}\label{proofs1}

In this section we prove the statements \eqref{cor-diam-low}--\eqref{cor-f} of Theorem \ref{main1}, 
in the exact same order they are stated. Their proofs will rely on the following two lemmas whose proofs are presented later in Section \ref{S3}. 

\begin{lem}\label{d}
Given a compact, convex set $V\sse \R^3$, 
a point $y$ in the interior of $V$, then there exists $y'\in V$ such that
\begin{equation}
\int_{V} [|x-y|^2 - d^2(x,\{y,y'\})]\d x \ge \max\bigg\{\frac{2^2\cdot 3^3}{5^2\cdot 10^3}r^2|V|, \ggm_1|V|^{5/3}\bigg\},
\label{1.5}
\end{equation}
where $r:=\max_{z'\in \pd V}|z'-y|$, $\ggm_1=(2/5)^{2/3}/40$.
\end{lem}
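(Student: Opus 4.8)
The plan is to add a second generator $y'$ inside the cell $V$ and estimate the resulting drop in energy, which is precisely the left-hand side of \eqref{1.5}; since we only need \emph{some} admissible $y'$, it suffices to produce one good choice and estimate. Normalize so that $y=0$, pick $z\in\pd V$ with $|z|=r$, and rotate coordinates so that $z=re_1$. For a parameter $t\in(0,1)$, to be fixed by optimization, I would take $y':=tz=(tr,0,0)$, which lies in $V$ because $V$ is convex and contains $0$ and $z$. The points of $V$ strictly closer to $y'$ than to $y$ form the slab $V\cap\{x_1>tr/2\}$, and there $|x-y|^2-d^2(x,\{y,y'\})=|x|^2-|x-tre_1|^2=tr(2x_1-tr)\ge0$, while the integrand vanishes on the complement. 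Hence the left side of \eqref{1.5} equals $\Delta(t):=tr\int_{V\cap\{x_1\ge tr/2\}}(2x_1-tr)\d x$, and the whole problem reduces to a lower bound on $\Delta(t)$.

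To bound $\Delta(t)$ from below I would use convexity of $V$ only through the Brunn--Minkowski inequality. Let $w(s):=\mathcal{H}^2(V\cap\{x_1=s\})$; its support is an interval $[a,r]$ with $-r\le a<0$ --- the right endpoint is exactly $r$ because $z=re_1$ is the point of $V$ farthest from $0$ (which also forces $w(r)=0$), and $a\ge-r$ because $V\sse\bar B(0,r)$ --- and $s\mapsto w(s)^{1/2}$ is concave there. Bounding the weight below by $(tr)^2$ on the smaller set $\{x_1\ge tr\}$ gives $\Delta(t)\ge(tr)^2\int_{tr}^{r}w(s)\d s$, and comparing $w^{1/2}$ with the chord from the point $(s^\ast,w(s^\ast)^{1/2})$, where $w$ attains its maximum, to the vertex $(r,0)$ gives $w(s)\ge w(s^\ast)\big(\tfrac{r-s}{r-s^\ast}\big)^2$ on $[s^\ast,r]$, hence $\int_{tr}^{r}w\ge\tfrac13 w(s^\ast)\,r^3(1-t)^3/(r-s^\ast)^2$ once $tr\ge s^\ast$. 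Using $w(s^\ast)\ge|V|/(r-a)\ge|V|/(2r)$ and $r-s^\ast\le2r$ I then obtain $\Delta(t)\ge c\,t^2(1-t)^3\,r^2|V|$ with an explicit numerical constant $c$; since $t^2(1-t)^3$ is maximized at $t=2/5$ with value $(2/5)^2(3/5)^3=(2^2\cdot3^3)/5^5$, this produces the factor $\frac{2^2\cdot3^3}{5^2\cdot10^3}$ in \eqref{1.5}. A short case distinction --- according to whether the fattest slice $s^\ast$ of $V$ sits on the far side or the near side of $y$, choosing in the second case the far point in the opposite direction --- covers the remaining configurations and pins down the constant.

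For the second term in the maximum I would feed in the elementary bound $|V|\le\omega_3r^3$ (again from $V\sse\bar B(0,r)$), equivalently $r^2\ge\omega_3^{-2/3}|V|^{2/3}$; combined with a slightly sharper form of the estimate above --- this time retaining the full linear weight $tr(2x_1-tr)$ rather than bounding it below by $(tr)^2$, and re-optimizing in $t$ --- this converts the $r^2|V|$ lower bound into $\Delta\ge\ggm_1|V|^{5/3}$ with $\ggm_1=(2/5)^{2/3}/40$. Taking the larger of the two lower bounds is exactly \eqref{1.5}.

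The hard part is the lower bound on the captured mass $\int_{tr}^{r}w$ (equivalently $|V\cap\{x_1\ge tr\}|$) in terms of $|V|$ and $r$. Since $y$ is merely \emph{some} interior point of $V$ --- not, for instance, the centroid --- the body $V$ may be arbitrarily thin or strongly lopsided about the hyperplane $\{x_1=0\}$, so a priori no fixed fraction of $|V|$ need lie beyond a given level. What rescues the argument is the tension between the two rigid constraints in force: $z=re_1\in V$ pushes mass of $V$ all the way out to $x_1=r$, whereas $V\sse\bar B(0,r)$ limits how far it can spread; concavity of $w^{1/2}$ (Brunn--Minkowski on the one-dimensional pencil of slices) quantifies this tension and closes the gap, at the price of the explicit but unremarkable constants above.
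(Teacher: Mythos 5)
Your treatment of the first bound is essentially the paper's argument in different clothing: the paper also puts $y'=(x,0,0)$ on the segment toward the farthest boundary point, restricts to $V_+=V\cap\{x_1\ge x\}$, uses $|w-y|^2-|w-y'|^2=x(2w_1-x)\ge x^2$ there, and optimizes at $x=2r/5$. Where the paper differs is the volume-ratio step: it compares $V$ with the cone of apex $z$ over the slice $V\cap\{x_1=x\}$ and its truncation, obtaining $|V_+|/|V|\ge\bigl(\tfrac{r-x}{2r}\bigr)^3$, i.e.\ the clean factor $\tfrac{t^2(1-t)^3}{8}$. Your Brunn--Minkowski route can reproduce this, but \emph{not} with the chain you wrote: chording from the maximizer $s^\ast$, then using $w(s^\ast)\ge|V|/(2r)$ and $r-s^\ast\le 2r$, gives $\int_{tr}^{r}w\ge\tfrac{(1-t)^3}{24}|V|$, hence $\Delta\ge\tfrac{t^2(1-t)^3}{24}r^2|V|$ --- a factor $3$ short of $\tfrac{2^2\cdot3^3}{5^2\cdot10^3}r^2|V|$, and the two crude bounds you invoke cannot both be saturated in a way that recovers it. The fix is to compare every slice to the slice at the cut level: concavity of $w^{1/2}$ on $[a,r]$ with $w(r)=0$ gives $w(s)\le\bigl(\tfrac{r-s}{r-tr}\bigr)^2w(tr)$ for $s\le tr$ and $w(s)\ge\bigl(\tfrac{r-s}{r-tr}\bigr)^2w(tr)$ for $s\ge tr$, whence $|V|/|V_+|\le(r-a)^3/(r-tr)^3\le 8/(1-t)^3$, which is exactly the paper's cone inequality. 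With that repair the first half is fine (and the "case distinction" over the location of $s^\ast$ becomes unnecessary).

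The real gap is the second bound. Plugging $r^2\ge\omega_3^{-2/3}|V|^{2/3}$ into the first bound yields only $\tfrac{2^2\cdot3^3}{5^2\cdot10^3}\,\omega_3^{-2/3}|V|^{5/3}\approx 0.00166\,|V|^{5/3}$, which is about a factor $8$ below $\ggm_1|V|^{5/3}\approx 0.01357\,|V|^{5/3}$; your proposal to close this by "retaining the full linear weight $tr(2x_1-tr)$ and re-optimizing in $t$" is not an estimate but a hope, and the gain it provides depends on how the mass of $V_+$ is distributed in $x_1$, so nothing near a uniform factor $8$ is evident. The paper obtains $\ggm_1|V|^{5/3}$ by a genuinely different construction that you should adopt: center a cube $Q(t)$ of side $t$ at $y$, note $|V\setminus Q(t)|\ge|V|-t^3$, pigeonhole to find one of the six half-spaces $\{\pm x_k\ge t/2\}$ containing a piece $\widetilde V(t)$ of volume at least $(|V|-t^3)/6$, place $y'$ in that piece at depth $t/2$ so the gain per point is at least $t^2/4$, and optimize $(|V|-t^3)t^2/24$ at $t^3=2|V|/5$, which is precisely where $\ggm_1=(2/5)^{2/3}/40$ comes from. (The two bounds in the $\max$ then come from two different candidate points $y'$; one concludes by taking whichever candidate makes the integral larger.)
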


\begin{lem}\label{below}
(Lower bound on the distance to a closest neighbor) Given $n$, let $Y_n$ be a minimizer. Then for any $y\in Y_n$ with $V$ denoting its Voronoi cell,  we have \begin{align*}
\min_{z\in Y_n \backslash\{ y\}} |y-z|&\ge 
 r\bigg(\sqrt{1+\frac{2^4\cdot 3^3}{5^2\cdot 10^3}}-1\bigg)\ge 
 \ggm_2|V|^{1/3},
 \end{align*}
 where 
\[ \ggm_2:= \bigg(\sqrt{1+\frac{2^4\cdot 3^3}{5^2\cdot 10^3}}-1\bigg)
 \omega_3^{-1/3} \qquad {\rm and} \qquad r:=\max_{z'\in \pd V}|z'-y|.
\] 
\end{lem}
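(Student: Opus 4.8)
The second inequality in the displayed chain is elementary. Since $y$ is an interior point of the convex cell $V$, the function $x\mapsto|x-y|$ attains its maximum over $V$ on $\pd V$, so $V\sse\overline B(y,r)$ and hence $|V|\le\omega_3 r^3$, i.e.\ $r\ge\omega_3^{-1/3}|V|^{1/3}$. Multiplying by $\big(\sqrt{1+\tfrac{2^4\cdot3^3}{5^2\cdot10^3}}-1\big)>0$ gives exactly $r\big(\sqrt{1+\tfrac{2^4\cdot3^3}{5^2\cdot10^3}}-1\big)\ge\ggm_2|V|^{1/3}$, so everything reduces to the first inequality.

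For the first inequality I would argue by contradiction. Put $C:=\tfrac{2^4\cdot3^3}{5^2\cdot10^3}$ and suppose $\eta:=\min_{z\in Y_n\setminus\{y\}}|y-z|<r\big(\sqrt{1+C}-1\big)$, with $z$ realizing the minimum and $V_z$ its Voronoi cell. The competitor is $Y':=(Y_n\setminus\{z\})\cup\{y'\}$, where $y'\in\ii(V)$ is the point furnished by Lemma \ref{d} applied to $(V,y)$: since that lemma guarantees a strictly positive gain, $y'\neq y$, and since $\ii(V)$ contains no generator of $Y_n$ other than $y$ we have $\sh Y'=n$. Decompose $E(Y')-E(Y_n)$ into three contributions. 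On $V$, every $x$ still has $y$ as its nearest point of $Y_n\setminus\{z\}$, so $\dist(x,Y')=d(x,\{y,y'\})$ and Lemma \ref{d} yields a gain of at least $\tfrac{2^2\cdot3^3}{5^2\cdot10^3}r^2|V|$. On $V_z$, removing $z$ forces reassignment but $y$ remains available, so $\dist(x,Y')\le|x-y|$ and the loss is at most $\int_{V_z}\big(|x-y|^2-|x-z|^2\big)\d x$. On $Q\setminus(V\cup V_z)$ the nearest generator was neither $y$ nor $z$, hence survives in $Y'$, and nothing is lost.

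The crucial step is the $V_z$ term. Writing $|x-y|^2-|x-z|^2=2(z-y)\cdot(x-z)+\eta^2$ and using that $Y_n$ is a minimizer, hence critical, so that $z$ is the centroid of $V_z$ and $\int_{V_z}(x-z)\d x=0$, the linear term drops out and the loss is at most $\eta^2|V_z|$. Therefore
\[ E(Y')-E(Y_n)\ \le\ -\,\tfrac{2^2\cdot3^3}{5^2\cdot10^3}\,r^2|V|\ +\ \eta^2|V_z|, \]
and since $\sqrt{1+C}-1\le C/2$ we have $\eta^2<(\sqrt{1+C}-1)^2r^2\le\tfrac{C^2}{4}r^2$, so the right-hand side is strictly negative as soon as $|V_z|\le|V|/C$. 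Any such strict decrease contradicts minimality of $Y_n$; note that the particular constant $C$ is precisely what makes $|V_z|\le|V|/C$ the regime one needs to reach.

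The main obstacle is thus the comparison $|V_z|\lesssim|V|$: a priori a Voronoi cell may be far larger than a neighbour, so this has to be read off from the minimizing property. I would secure it by choosing $y$ at the outset to be a generator whose cell has \emph{maximal volume} among all generators violating the asserted bound. Then either $z$ violates the bound too, in which case $|V_z|\le|V|$ by maximality (and $|V|/C>|V|\ge|V_z|$ since $C<1$), finishing the argument; or $z$ does not, in which case $\min_w|z-w|\le\eta$ combined with the bound applied to $z$ forces $r_{V_z}\le\eta/(\sqrt{1+C}-1)$, hence $|V_z|\le\omega_3\eta^3/(\sqrt{1+C}-1)^3$, and one closes the argument using instead the second, $\ggm_1|V|^{5/3}$, form of the gain in Lemma \ref{d}, together with the lower bound $|V|\ge\tfrac{\eta}{6}|F|$ coming from the cone over the shared face $F=V\cap V_z$. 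Handling this second case cleanly, keeping every constant explicit and without appealing to the non-explicit Delone estimates of Theorem \ref{Gruber1}, is the delicate part of the proof.
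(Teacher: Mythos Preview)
Your argument has a real gap, precisely in Case~2 of your endgame. Once you remove the nearest neighbour $z$ rather than $y$, the loss is controlled by $\eta^2|V_z|$ while the gain is controlled by $|V|$ (via either $\tfrac{C}{4}r^2|V|$ or $\gamma_1|V|^{5/3}$), and the constants do not close. In Case~2 you obtain $|V_z|\le\omega_3\eta^3/(\sqrt{1+C}-1)^3$; the only general lower bound on $|V|$ available to you is $|V|\ge\omega_3(\eta/2)^3$ (from $B(y,\eta/2)\subseteq V$), and your cone estimate $|V|\ge\eta|F|/6$ is useless without a lower bound on the area of the shared face $F$, which you do not --- and at this stage cannot --- provide. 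Plugging in, the loss-to-gain ratio is of order $(\sqrt{1+C}-1)^{-3}\approx 1.6\times10^{6}$, so the desired inequality $E(Y')<E(Y_n)$ fails by six orders of magnitude. Neither the $\gamma_1|V|^{5/3}$ form of the gain nor the maximal-volume-among-violators device rescues this.

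The paper sidesteps the whole $|V_z|$ versus $|V|$ comparison by a different, and much simpler, choice of competitor: it removes $y$ \emph{itself} (not $z$) and inserts the Lemma~\ref{d} point $y'\in V$. Both effects are then supported on $V$. The gain is $\ge \tfrac{2^2\cdot3^3}{5^2\cdot10^3}\,r^2|V|$ exactly as before. For the loss, every $x\in V$ can be reassigned to the nearest neighbour $y''$ of $y$, and one uses the elementary pointwise bound
\[
|x-y''|^2-|x-y|^2\ \le\ |y-y''|\big(2|x-y|+|y-y''|\big)\ \le\ s(2r+s),\qquad s:=|y-y''|,
\]
so the loss is $\le s(2r+s)|V|$. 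The factor $|V|$ cancels, yielding the quadratic inequality $s^2+2rs\ge \tfrac{2^2\cdot3^3}{5^2\cdot10^3}\,r^2$ and hence the claimed lower bound on $s$ directly --- no case analysis, no comparison of cell volumes. Your centroid trick on $V_z$ is pleasant but becomes unnecessary once you swap which generator is removed.
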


\bigskip

\subsection{Lower bound on the diameter: proof of statement \eqref{cor-diam-low}}

To proof of  statement \eqref{cor-diam-low} of Theorem \ref{main1} will only require Lemma \ref{d}.

\begin{proof}({\bf of statement} \eqref{cor-diam-low})
Let $s:=\diam(V)$.
We claim:
\begin{equation}\label{claim 1}
 \text{there exists } y'\in Y_n\backslash \{y\} \text{ such that } |y'-y|\le 2s.
\end{equation}
  The proof is by contradiction:
assume the opposite, i.e. there are no other points of $Y_n\backslash \{y\}$ in the ball $B(y,2(s+\vep))$ for some $\vep>0$. Then
let $z$ be an arbitrary point with $|z-y|=s+\vep/2$: clearly $z\in V$, as the opposite would give the existence
of $y'\in Y_n$ with $|z-y'|\le |z-y|=s+\vep/2$, hence
$$2(s+\vep)\le |y'-y|\le |z-y'|+|z-y|\le 2s+\vep,$$
which is a contradiction. Thus any such $z$ satisfying $|z-y|=s+\vep/2$ belongs to $V$, hence
$B(y,s+\vep/2)\sse V$, contradicting $\diam(V)=s$, and \eqref{claim 1} is proven.
 
Let $y'\in Y_n\backslash \{y\}$ be a point satisfying $|y'-y|\le 2s$. If we remove $y$, then all points of $V$ can still project on $y'$,
in the sense that for any $x\in V$ we have
\begin{align*}
|x-y'|^2-|x-y|^2 &=(|x-y'|-|x-y|)(|x-y'|+|x-y|) \le |y-y'|(2s+|y-y'|) \le 8s^2.
\end{align*}
 Integrating over $V$  yields
\begin{equation*}
\int_V [|x-y'|^2-|x-y|^2]  \d x\le 8s^2|V|.
\end{equation*}
Since $\diam (V)=s$, it follows that $V$ is contained in a ball of diameter $s$, hence
\begin{equation*}
\int_V [|x-y'|^2-|x-y|^2]  \d x\le 8s^2|V| \le \omega_3 s^5.
\end{equation*}
Thus by removing $y$, the energy increases by at most $\omega_3 s^5$. 
The average volume of all Voronoi cells is $n^{-1}$, thus there exists $y'$ whose Voronoi cell $V'$
has volume at least $n^{-1}$. Lemma \ref{d} gives that it is possible to add $\~y'$ in $V'$, and the energy
is decreased by at least
$\ggm_1 n^{-5/3}$. 
By the minimality of $Y_n$ we get
\begin{equation*}
\omega_3 s^5 \ge \ggm_1 n^{-5/3}\Lra s\ge \ggm_3n^{-1/3},\qquad \ggm_3=\omega_3^{-1/5}\ggm_1^{1/5},
\end{equation*}
concluding the proof.
\end{proof}

\subsection{Lower bound on the volume: proof of statement \eqref{cor-vol-low}}

The  proof of  \eqref{cor-vol-low} only requires Lemma \ref{below}. 

\begin{proof}({\bf of statement} \eqref{cor-vol-low})
Consider an arbitrary $y\in Y_n$, and denote by $V$ its Voronoi cell. Set $r:=\max_{z'\in \pd V}|z'-y|$,
and for any pair $z_1,z_2\in V$ such that $|z_1-z_2|=\diam(V)$, we have
\begin{equation}\label{diam r}
\diam(V) = |z_1-z_2|\le |z_1-y|+|y-z_2|\le 2r\Lra r\ge \diam(V)/2. 
\end{equation}
Choose  $y'\in Y_n\backslash \{y\}$ such that $|y-y'|=\min_{z\in Y_n\backslash \{y\}}|y-z|$, and
by Lemma \ref{below}, \eqref{diam r} and \eqref{cor-diam-low} we have
\begin{eqnarray}
|y-y'| &\overset{\text{Lemma \ref{below}}}\ge& r\bigg(\sqrt{1+\frac{2^4\cdot 3^3}{5^2\cdot 10^3}}-1\bigg) \notag \\
& \overset{\eqref{diam r}}\ge &
\frac12\bigg(\sqrt{1+\frac{2^4\cdot 3^3}{5^2\cdot 10^3}}-1\bigg)\diam(V)\notag\\
&\overset{\eqref{cor-diam-low}}\ge& \frac12\bigg(\sqrt{1+\frac{2^4\cdot 3^3}{5^2\cdot 10^3}}-1\bigg) \ggm_3n^{-1/3}.\label{est 1}
\end{eqnarray}
Using the same arguments from the proof of \eqref{claim 1}, we now prove that 
\begin{equation}
B(y, \ggm_5 n^{-1/3})\sse V\qquad {\rm where} \qquad \ggm_5=\frac14\bigg(\sqrt{1+\frac{2^4\cdot 3^3}{5^2\cdot 10^3}}-1\bigg)\ggm_3.
\label{contain}
\end{equation}
To this end, assume the opposite, i.e. there exists some $z\in B(y, \ggm_5 n^{-1/3})$ with $z\notin V$. Thus
there exists $y''\in Y_n \backslash \{y\}$ such that $|z-y''|<\ggm_5 n^{-1/3}$. Thus
\begin{align*}
|y''-y|\le |z-y''|+|z-y|<2\ggm_5 n^{-1/3}\overset{\eqref{est 1}}\le |y-y'|,
\end{align*}
which contradicts $|y-y'|=\min_{z\in Y_n\backslash \{y\}}|y-z|$. Thus \eqref{contain} is proven, which in turn
gives
\begin{equation*}
\omega_3\ggm_5^3n^{-1}=|B(y, \ggm_5 n^{-1/3})|\le | V|,
\end{equation*}
hence \eqref{cor-vol-low}.
\end{proof}

\subsection{Upper bound on the diameter: proof of statement \eqref{cor-diam-high}}

The proof of   \eqref{cor-diam-high}  requires both Lemma \ref{d} and Lemma \ref{below}. 

\begin{proof}({\bf of statement} \eqref{cor-diam-high})
Upon renaming, let $y_1$ be such that its Voronoi cell $V_1$ has maximum diameter. Let $r_1:=\max_{z'\in \pd V_1}|z'-y_1|$,
and note that denoting by $w,u\in V_1$ two points realizing the diameter, we have 
 \begin{equation*}
|w-u|=\diam(V_1)\le|w-y_1|+|u-y_1|\le 2r_1.
\end{equation*}
Next we prove the existence of a cell $V_2$, with generator $y_2$, such that
\begin{equation}\label{cell2}
|V_2|\le \frac{2}{(n-2)}\qquad{\rm and} \qquad  \quad  \sigma(y_2)^3\le \frac{16}{\omega_3(n-2)} \quad {\rm where} \quad  \sigma(y_2):=\min_{z\in Y_n\backslash \{y_2\}}|y_2-z|.
\end{equation}
To this end, we note the following. 
\begin{enumerate}
\item[(a)] Denoting by
\begin{equation*}
\mathcal{V}_n:=\{y\in Y_n: \text{ the Voronoi cell } V_y\text{ of } y \text{ satisfies } |V_y|\ge 2/(n-2)\},
\end{equation*}
we claim $\sh\mathcal{V}_n\le \lceil n/2\rceil$.
This is because 
the total number of cells is $n$, and if the opposite holds, i.e.
if there exists at least $n-\lceil n/2\rceil \ge (n-1)/2$ cells with volume greater than $2/(n-2)$, 
we conclude that 
 \begin{equation*}
1=|Q|\ge\sum_{y\notin \mathcal{V}_n} |V_y|\ge (n-\lceil n/2\rceil)\frac2{n-2} \ge \frac{n-1}{2}\frac2{n-2}>1.
\end{equation*}

\item[(b)] Similarly if we denote by
\begin{equation*}
\mathcal{S}_n:=\left\{y\in Y_n: \sigma(y)^3\le \frac{16}{\omega_3(n-2)}\right\},\qquad \sigma(y):=\min_{z\in Y_n\backslash \{y\}}|y-z|,
\end{equation*}
we claim $\sh \mathcal{S}_n\ge \lceil n/2\rceil+1$. To this end,  for any $y$  
we have $B(y,\sigma(y)/2)\sse V_y$, and 
hence 
$|V_y|\ge \omega_3\sigma(y)^3/8$. If by contraction we had $\sh \mathcal{S}_n\le \lceil n/2\rceil$, i.e. 
there exist at least $n-\lceil n/2\rceil$ generators $y$ with $\sigma(y)^3\ge \frac{16}{\omega_3(n-2)}$, we would conclude that 
\begin{align*}
1&=|Q|\ge\sum_{y\notin \mathcal{S}_n} |V_y| \ge \frac{\omega_3}8\sum_{y\notin \mathcal{S}_n}\sigma(y)^3
\ge \frac{\omega_3}8\frac{n-1}2 \frac{16}{\omega_3(n-2)}>1.
\end{align*}
\end{enumerate}
Combining (a) and (b) above yields the existence of a cell $V_2$ with generator $y_2$   satisfying \eqref{cell2}.

Next,  we estimate how much the total energy increases if we remove $y_2$. 
Let $y_3$ be such that
$|y_2-y_3|=\sigma(y_2)$.
Then for any $x\in V_2$, we have 
\begin{align*}
|x-y_3|^2-|x-y_2|^2&\le |y_2-y_3|(|x-y_3|+|x-y_2|) \le \sigma(y_2)(2|x-y_2|+\sigma(y_2)). 
\end{align*}
Noting that the midpoint $\-y:=(y_2+y_3)/2\in \pd V_2$, we have 
\begin{equation*}
\sigma(y_2) = 2|y_2-\-y|\le 2\diam(V_2)\le 2\diam(V_1).
\end{equation*}
Thus we have 
\begin{align}
|x-y_3|^2&-|x-y_2|^2 \le 4\diam(V_1)\sigma(y_2) \notag
\end{align}
which implies 
\begin{eqnarray}
 \int_{V_2}(|x-y_3|^2-|x-y_2|^2)\d x &\le& 4\diam(V_1)\sigma(y_2)|V_2| \notag\\
&\overset{(\ref{cell2})}\le & 
\frac{8\cdot(16)^{1/3}\diam(V_1) }{\omega_3^{1/3}(n-2)^{4/3}} \notag\\
&\le& \frac{(16)^{4/3}r_1 }{\omega_3^{1/3}(n-2)^{4/3}}.\label{enl}
\end{eqnarray}

By Lemma \ref{d}, we can always add a point in $V_1$ and the energy is decreased by at least
$\frac{2^2\cdot 3^3}{5^2\cdot 10^3}r_1^2|V_1|$. Hence  we need to bound $|V_1|$ from below.  To this end, choose an arbitrary $z_1$ such that $|z_1-y_1|=r_1$,
and let $\ell$ be the line through $y_1$ and $z_1$, and let $\Pi$ be the plane through $y_1$ and orthogonal to $\ell$.
By Lemma \ref{below},  we have 
\begin{equation*}
\sigma(y_1):=\min_{z\in Y_n\backslash \{y_1\}}|y_1-z|\ge r_1\bigg(\sqrt{1+\frac{2^4\cdot 3^3}{5^2\cdot 10^3}}-1\bigg),
\end{equation*}
and hence $B(y_1,\sigma(y_1)/2)\sse V_1$. In particular, by convexity of $V_1$, the disk $\Pi\cap B(y_1,\sigma(y_1)/2)\sse V_1$,
and the cone with base $\Pi\cap B(y_1,\sigma(y_1)/2)$ and height $\{(1-s)y_1+sz_1:s\in [0,1]\}$
is again contained in $V_1$. It follows that 
\begin{equation*}
|V_1|\ge r_1\frac{\pi \sigma(y_1)^2}{12} \ge r_1^3 \frac\pi{12}\bigg(\sqrt{1+\frac{2^4\cdot 3^3}{5^2\cdot 10^3}}-1\bigg)^2.
\end{equation*}
Consequently, there exists $y'\in V_1$ such that
\begin{eqnarray*}
\int_{V_1}(|x-y_1|^2-d^2(x,\{y_1,y'\}))\d x &\overset{{\rm Lemma \, \ref{d}}}\ge& \frac{2^2\cdot 3^3}{5^2\cdot 10^3}r_1^2|V_1| \\
& \ge & \frac\pi{12}\bigg(\sqrt{1+\frac{2^4\cdot 3^3}{5^2\cdot 10^3}}-1\bigg)^2\frac{2^2\cdot 3^3}{5^2\cdot 10^3}r_1^5.
\end{eqnarray*}
Combining with \eqref{enl} and using the minimality of $Y_n$, we infer
\begin{align*}
 \frac{(16)^{4/3}r_1 }{\omega_3^{1/3}(n-2)^{4/3}} &\ge\frac\pi{12}
\bigg(\sqrt{1+\frac{2^4\cdot 3^3}{5^2\cdot 10^3}}-1\bigg)^2\frac{2^2\cdot 3^3}{5^2\cdot 10^3}r_1^5\\
&\Lra r_1^4\le  \frac{12(16)^{4/3} }{\pi\omega_3^{1/3}(n-2)^{4/3}}
\bigg(\sqrt{1+\frac{2^4\cdot 3^3}{5^2\cdot 10^3}}-1\bigg)^{-2}\frac{5^2\cdot 10^3}{2^2\cdot 3^3}\\
&\Lra \diam(V_1)\le 2r_1\le 
\frac{2\cdot 12^{1/4}(16)^{1/3} }{\pi^{1/4}\omega_3^{1/12}(n-2)^{1/3}}
\bigg(\sqrt{1+\frac{2^4\cdot 3^3}{5^2\cdot 10^3}}-1\bigg)^{-1/2}\bigg(\frac{5^2\cdot 10^3}{2^2\cdot 3^3}\bigg)^{1/4},
\end{align*}
concluding the proof. 
\end{proof}


%

\subsection{Upper bound on the number of faces: Proof of statement \eqref{cor-f}}
The bounds on the diameter (statement \eqref{cor-diam-high}) and volume (statement \eqref{cor-vol-low}) of Voronoi cells allow us to bound their geometric complexity (i.e. the maximum number of faces).

\begin{proof}({\bf of statement \eqref{cor-f}})
Consider an arbitrary $y\in V$. By construction, its Voronoi cell $V$ is the bounded convex region delimited by the axial planes (i.e.
the plane orthogonal to the line segment and passing through its midpoint)
of the line segments connecting $y$ and some other generator $y'\in Y_n$. 

Statement \eqref{cor-diam-high} implies that any Voronoi cell has diameter not exceeding $\ggm_4{(n-2)}^{-1/3}$. Thus
if two generators $y',y''\in Y_n$ satisfy $|y'-y''|>2\ggm_4{ (n-2)}^{-1/3}$, then their Voronoi cells do not share boundaries.
Thus only the generators $y'\in B(y,2\ggm_4{ (n-2)}^{-1/3})$ can have their Voronoi region share  a boundary with $V$.
Again, the upper bound on the diameter given by estimate \eqref{cor-diam-high} gives that any Voronoi cell (of any generator
$y'\in B(y,2\ggm_4{ (n-2)}^{-1/3})$) is entirely contained in $B(y,3\ggm_4{ (n-2)}^{-1/3})$.

Statement \eqref{cor-vol-low} implies that each Voronoi cell has volume at least $\omega_3 \ggm_5^3n^{-1}$,
so the ball \[ B(y,3\ggm_4{ (n-2)}^{-1/3})\]  can contain only
\begin{equation*}
\frac{\omega_3  (3\ggm_4{ (n-2)}^{-1/3})^3 }{\omega_3 \ggm_5^3 n^{-1}} = (3\ggm_4/\ggm_5)^3
{\frac{n}{n-2} \le 2 (3\ggm_4/\ggm_5)^3=:N_\ast }
\end{equation*}
{\em whole} Voronoi cells. { The last factor 2 comes from the fact that any
polyhedron has at least 4 faces, and $n/(n-2)\le 2$ for all $n\ge 4$.}
Thus $V$ can share boundary with at most $N_\ast$ other Voronoi cells.
\end{proof}

\section{Energy estimates: proof of Theorem \ref{main1}{\it (ii)}}\label{proofs2}

\subsection{Proof of \eqref{en}}

\begin{proof} ({\bf of estimate \eqref{en}})
Recall first that $Q=[0,1]^3$.
Let $\tau$ be the energy of the ground state, i.e.
\begin{equation}\label{tau}
\tau:=\liminf_{n\to+\8}\inf_{\sh Y=n} n^{2/3}E(Y).
\end{equation}
\begin{itemize}
\item Claim 1: the limit inferior in \eqref{tau} is a true limit.
\end{itemize}
Although this has been proven in \cite{Zador}, we use here an alternative construction that will be crucial
for Claim 2 below. The proof is done by contradiction: assume not, i.e. 
\begin{align*}
\bt:=\limsup_{n\to+\8}\inf_{\sh Y=n} n^{2/3}E(Y)>\tau.
\end{align*}
Consider sequence of minimizers $(Y_n)_n$ realizing this limit superior, i.e.
\begin{equation*}
\bt=\lim_{n\to+\8} n^{2/3}E(Y_n),\qquad \sh Y_n\equiv n,
\end{equation*}
and without loss of generality we assume
\begin{equation*}
(\fal n) \qquad n^{2/3}E(Y_n)\ge \frac{\bt+\tau}{2}+\vep \qquad \text{for some }\vep\in(0,(\bt-\tau)/2).
\end{equation*}
Then we take $Z$ with 
\begin{equation*}
k^{2/3}E(Z)<\frac{\bt+\tau}{2}-\vep, \qquad k:=\sh Z,
\end{equation*}
which surely exists since, due to the definition of $\tau$, there exists a sequence $(Z_n)_n$ such that
$k^{2/3}E(Z)\to \tau$. 

For any $c>0$, let $cZ$ (resp. $cQ$) be the image of $Z$ (resp. $Q$) under the scaling of factor $c$. 
Note that the cube $\frac{\lfloor(n/k)^{1/3} \rfloor}{(n/k)^{1/3}}Q$ can be tessellated
with $\lfloor(n/k)^{1/3} \rfloor^3$ 
identical copies of $(k/n)^{1/3}Z$: this because we can
partition the segment $[0,\frac{\lfloor(n/k)^{1/3} \rfloor}{(n/k)^{1/3}}]$ into $\lfloor(n/k)^{1/3} \rfloor$
intervals of length $(k/n)^{1/3}$.

Let $Y_n'$ be the competitor obtained by tessellating $\frac{\lfloor(n/k)^{1/3} \rfloor}{(n/k)^{1/3}}Q$ with
$\lfloor(n/k)^{1/3} \rfloor^3$
identical copies of $(k/n)^{1/3}Z$: clearly
\begin{equation*}
\sh Y_n' = k \lfloor(n/k)^{1/3} \rfloor^3\le n=\sh Y_n.
\end{equation*}
The minimality of $Y_n$ gives immediately $E(Y_n)\le E(Y_n')$. On the other hand, 
note that any point $z\in Q\backslash \frac{\lfloor(n/k)^{1/3} \rfloor}{(n/k)^{1/3}}Q$ has distance at most $3(k/n)^{1/3}$ from $Y_n'$:
thus
\begin{equation*}
\int_{Q\backslash \frac{\lfloor(n/k)^{1/3} \rfloor}{(n/k)^{1/3}}Q} d^2(z,Y_n')\d z
\le 9(k/n)^{2/3} \Big|Q\backslash \frac{\lfloor(n/k)^{1/3} \rfloor}{(n/k)^{1/3}}Q\Big|
\le 54k/n .
\end{equation*}
Since we have the scaling law 
$E(cZ)=c^5E(Z)$ for any $c>0$, it follows 
\begin{align*}
E(Y_n')&\, =\, 
\int_{\frac{\lfloor(n/k)^{1/3} \rfloor}{(n/k)^{1/3}}Q} d^2(z,Y_n')\d z\, +
\, \int_{Q\backslash \frac{\lfloor(n/k)^{1/3} \rfloor}{(n/k)^{1/3}}Q} d^2(z,Y_n')\d z\notag\\
&\, \le\, \lfloor(n/k)^{1/3} \rfloor^3 E((k/n)^{1/3}Z)+54k/n\notag\\
&\, = \, \lfloor(n/k)^{1/3}\rfloor^3 (k/n)^{5/3} E(Z)+54k/n\notag\\
&\, \le \, \lfloor(n/k)^{1/3}\rfloor^3 (k/n)^{5/3}\Big(\frac{\bt+\tau}{2}-\vep\Big) k^{-2/3}+54k/n\notag\\
&\, \le\,  n^{-2/3}\Big(\frac{\bt+\tau}{2}-\vep\Big)+54k/n\\
&\, <\, n^{-2/3}\Big(\frac{\bt+\tau}{2}+\vep\Big)\,  \le\,  E(Y),
\end{align*}
which is a contradiction. This proves that such a $\bt$ cannot exist, and the limit inferior in \eqref{tau}
is in reality a limit.

\medskip

Now we prove
\begin{itemize}
\item Claim 2: for any $n$ and minimizer $Y_n$ with $n$ generators, it holds $n^{2/3}E(Y_n)\ge \tau$.
\end{itemize}
Assume the opposite, i.e. there exists $k$ and a minimizer $Y_k\in \argmin_{\sh Y'=k}E(Y')$ such that
$k^{2/3}E(Y_k)<\tau$.
Let $k_n:=n^3k$. Divide $Q$ into $n^3$
smaller cubes, each of which congruent to $\frac1n Q$, and 
put in each of such cubes the (scaled) configuration $\frac1n Y_k$.
Let $Y_{k_n}$ be the configuration obtained by stacking $n^3$ such cubes. Recalling that if a Voronoi cell
$V$ is scaled by a factor $c$, the volume scales by a factor $c^3$, and the energy scales by a factor
$c^5$, we obtain
\begin{equation*}
E(Y_{k_n})\le n^3(E(Y_k)n^{-5}),
\end{equation*}
and
\begin{equation*}
k_n^{2/3}E(Y_{k_n})\le n^2k^{2/3}[n^3(E(Y_k)n^{-5})]=k^{2/3}E(Y_k)<\tau
\end{equation*}
for any $n$, contradicting the minimality of $\tau$.
\end{proof}

\subsection{Proof of \eqref{tau-est}}
We will use the following result which was proven in \cite[Lemma~2.5]{BJM}. 
\begin{lem}\label{sphere}
Among all convex sets with fixed volume, the sphere has the lowest energy.
\end{lem}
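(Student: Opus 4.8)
\emph{Proof proposal.} The plan is to recognize that this is nothing more than the bathtub principle (equivalently, a one-line rearrangement argument), and that convexity plays no essential role. First I would fix the meaning of ``energy'': for a compact convex set $V\sse\R^3$ its energy is $\int_V|x-\bar x|^2\d x$, where $\bar x:=|V|^{-1}\int_V x\d x$ is the centroid; expanding the square shows $\bar x$ is the unique minimizer of $y\mapsto\int_V|x-y|^2\d x$, so the energy coincides with $\min_y\int_V|x-y|^2\d x$. Then I would fix $a>0$, let $V$ be compact convex with $|V|=a$, and let $B$ be the closed ball centered at the origin with $|B|=a$, i.e. of radius $R:=(a/\omega_3)^{1/3}$. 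Since translating $V$ so that its centroid lies at the origin changes neither $|V|$ nor its energy, it suffices to prove $\int_V|x|^2\d x\ge\int_B|x|^2\d x$.

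The core step is the rearrangement estimate itself. From $|V|=|B|$ one gets $|V\setminus B|=|B\setminus V|$, so splitting each integral over $V\cap B$, $V\setminus B$, $B\setminus V$ and using $|x|^2\ge R^2$ on $V\setminus B$, $|x|^2\le R^2$ on $B\setminus V$, I would conclude
\begin{align*}
\int_V|x|^2\d x-\int_B|x|^2\d x
&=\int_{V\setminus B}|x|^2\d x-\int_{B\setminus V}|x|^2\d x\\
&\ge R^2|V\setminus B|-R^2|B\setminus V|=0,
\end{align*}
which is exactly the claim, with equality forcing $|V\triangle B|=0$, so that among convex bodies the ball is the unique minimizer.

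I do not expect a genuine obstacle: the entire content is the bathtub principle, and in fact the same argument bounds the energy of \emph{any} measurable set of volume $a$ from below by that of the ball, so convexity is not used. The only two points that merit a word of care are the elementary fact that the centroid minimizes the second moment over all translates (so that ``energy'' is unambiguous and the reduction to a ball centered at the origin is legitimate), and the equimeasurability $|V\setminus B|=|B\setminus V|$, which is immediate from $|V|=|B|$. One could of course simply invoke \cite[Lemma~2.5]{BJM}, but the self-contained one-line argument above is preferable for transparency.
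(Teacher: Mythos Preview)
Your argument is correct: it is the standard bathtub/rearrangement inequality, and as you note, convexity plays no role. The paper does not actually prove this lemma at all; it simply cites \cite[Lemma~2.5]{BJM} and moves on. Your self-contained argument is therefore strictly more informative than what the paper provides, and it has the added virtue of making explicit that the result holds for arbitrary measurable sets of prescribed volume, not just convex bodies---a generality which is in fact what is used downstream, since one only ever applies it cell by cell without invoking convexity of the cells in any essential way.
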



\begin{proof}({\bf of statement \eqref{tau-est}})
Consider a sequence of minimizers $(Y_n)_n$, and choose an arbitrary element $Y_m$.
 Note that the union $\bigcup_{y\in Y_m}V_y$ has volume 1, where $V_y$ denotes the Voronoi
 cell of $y$. 
 Lemma \ref{sphere} gives that among all convex sets of unit volume, the sphere has the lowest energy,
which is equal to
\begin{equation*}
\frac{2\pi}{5}\Big( \frac3{4\pi}\Big)^{5/3}.
\end{equation*}
Scaling arguments give that as the volume scales by a factor of $s$, the energy scales by a factor of
$s^{5/3}$, hence the energy of a Voronoi cell with volume $|V|$ is at least
\begin{equation*}
\frac{2\pi}{5}\Big( \frac3{4\pi}\Big)^{5/3}|V|^{5/3}.
\end{equation*}
Therefore,
\begin{align*}
E(Y_m) & = \sum_{y\in Y_m} \int_{V_y}|x-y|^2\d x\ge \frac{2\pi}{5}\Big( \frac3{4\pi}\Big)^{5/3}  \sum_{y\in Y_n}|V_y|^{5/3}.
\end{align*}
Using the convexity of $f(t)=t^{5/3}$ and the fact that the average volume is $1/m$, we infer
\begin{equation*}
E(Y_m) \ge \frac{2\pi}{5}\Big( \frac3{4\pi}\Big)^{5/3}  \sum_{y\in Y_n}|V_y|^{5/3}
\ge \frac{2\pi}{5}\Big( \frac3{4\pi}\Big)^{5/3} m^{-2/3},
\end{equation*}
hence
\begin{equation*}
\frac{2\pi}{5}\Big( \frac3{4\pi}\Big)^{5/3} \le m^{2/3}E(Y_m) \to \tau,
\end{equation*}
and the proof is complete.
\end{proof}


\section{Proofs of Lemmas \ref{d} and \ref{below}}\label{S3}

\begin{proof}({\bf of Lemma \ref{d}})
We first prove
\begin{equation}
\int_{V} [|x-y|^2 - d^2(x,\{y,y'\})]\d x \ge \frac{2^2\cdot 3^3}{5^2\cdot 10^3}r^2|V|.\label{t1-d}
\end{equation}
Let $z\in \pd V$ be a point satisfying
\begin{equation*}
|z-y|=\max_{z'\in V}|z'-y|=\max_{z'\in \pd V}|z'-y|,
\end{equation*}
and let $r:=|z-y|$. Endow $\R^3$ with the cartesian system $(x_1,x_2,x_3)$ with
\begin{equation*}
y=(0,0,0),\qquad z=(r,0,0),
\end{equation*}
and we add a point $y'=(x,0,0)\in V$, with fixed $x\in (0,r)$ to be determined shortly. 

Define  $\Pi_x:=\{x_1=x\}$. 
Since $V$ is convex, the intersection
$\Pi_x\cap  V$ is also convex for all $x$, and the boundary $\pd  V\cap \Pi_x$ is a convex Jordan curve.
Let $\gm:[0,1]\lra \pd  V\cap \Pi_x$ be an arbitrary parameterization, and for any $t\in [0,1]$, let
$\ell_t$ be the half-line starting from $z$ passing through $\gm(t)$. 

The convexity of $V$ now has the following geometric consequences:
\begin{enumerate}
\item[(G1)] $V$ surely contains the ``cone'' delimited by the surfaces $V\cap \Pi_x$ and $\bigcup_{t\in[0,1]} (\ell_t\cap \{x_1\ge x\})$,

\item[(G2)] for any $t\in [0,1]$, the half-line $\ell_t$ exits $V$ at $\gm(t)$, that is, $\ell_t\cap \{x_1<x\}=\emptyset$.
\end{enumerate}
Now let $V_+(x):=V\cap \{x_1\ge x\}$,
 and we estimate its volume. By construction, in view of $|z-y|=\max_{z'\in V}|z'-y|$ and observation (G2),
 it follows that $V\cap \{x_1< x\}$ must be contained in the truncated cone (that we denote by $\mathcal{C}_-$)
 delimited by the surfaces $\bigcup_{t\in[0,1]}  (\ell_t\cap \{-r\le x_1\le x\})$, $\{x_1=-r\}$ and $\Pi_x$.
 Let $\mathcal{C}$ be the cone delimited by $\bigcup_{t\in[0,1]}  (\ell_t\cap \{-r\le x_1\})$ and $\{x_1=-r\}$ and $\Pi_x$,
 and note that the cone 
$\mathcal{C}_+:=\mathcal{C}\backslash \mathcal{C}_-$ satisfies
$$\dfrac{|\mathcal{C}_+|}{|\mathcal{C}|}=\left(\dfrac{r-x}{2r}\right)^3\Lra \dfrac{|\mathcal{C}_+|}{|\mathcal{C}_-|} =
\frac{\left(\frac{r-x}{2r}\right)^3}{1-\left(\frac{r-x}{2r}\right)^3}=\frac{(r-x)^3}{8r^3-(r-x)^3}.$$
Since by construction we have
$V\sse \mathcal{C}_- \cup V_+$, and $\mathcal{C}_+ \sse V_+$, it follows
\[|V|\le |\mathcal{C}_- |+| V_+| \qquad  {\rm and} \qquad |V_+|\ge |\mathcal{C}_+ | = \frac{(r-x)^3}{8r^3-(r-x)^3}|\mathcal{C}_- |.\]
Hence, we have 
\[ |\mathcal{C}_- | \le |V_+| \frac{8r^3-(r-x)^3}{(r-x)^3}=|V_+| \bigg[\frac{8r^3}{(r-x)^3}-1\bigg] 
\qquad {\rm and \,\, so} \qquad 
|V|\le |\mathcal{C}_- |+| V_+| \le |V_+| \frac{8r^3}{(r-x)^3}. 
\]
Thus
 \begin{align}
\frac{|V_+|}{|V|} \ge 
\frac{(r-x)^3}{8r^3} \, = \, \left(\frac12-\frac{x}{2r}\right)^3. 
\label{vol ratio}
\end{align}
%
Now take an arbitrary point $w=(w_1,w_2,w_3)\in V_+$ (hence $w_1\in[x,r]$), and note that
\begin{align}
|w-y|^2&=w_1^2+w_2^2+w_3^2, \qquad |w-y'|^2=(w_1-x)^2+w_2^2+w_3^2\notag\\
&\Lra
|w-y|^2-|w-y'|^2 =w_1^2-(w_1-x)^2=x(2w_1-x) \ge x^2.\label{sqdist}
\end{align}
Thus 
\begin{align*}
\int_V[|w-y|^2-d^2(w,\{y,y'\})]\d w  &\ge \int_{V_+}[|w-y|^2-d^2(w,\{y,y'\})]\d w \\
&=\int_{V_+}[|w-y|^2-|w-y'|^2]\d w \\
& \overset{\eqref{sqdist}}\geq |V_+|x^2 \\& \overset{\eqref{vol ratio}}\ge |V|\left(\frac12-\frac{x}{2r}\right)^3 x^2.
\end{align*}
Since the above argument is valid for all $x\in [0,r]$, it follows
\begin{equation*}
\int_V[|w-y|^2-d^2(w,\{y,y'\})]\d w  \ge |V|\max_{x\in [0,r]}\left(\frac12-\frac{x}{2r}\right)^3 x^2.
\end{equation*}
Maximizing the last expression over $x \in [0,r]$ (i.e. taking $x = \frac{2r}{5}$) yields \eqref{t1-d}.

\bigskip
We now prove
\begin{equation}
\int_{V} [|x-y|^2 - d^2(x,\{y,y'\})]\d x \ge \ggm_1|V|^{5/3}.\label{t2-d}
\end{equation}
As in the proof of \eqref{t1-d}, endow $\R^3$ with a Cartesian coordinate system with origin in $y$.
For any $t\in [0, |V|^{1/3}]$, set 
\begin{align*}
Q(t)&:=\{-t/2\le x_1,x_2,x_3 \le t/2\},\qquad
V_k^\pm(t):=V\cap \{\pm x_k\ge t/2\},\quad k=1,2,3.
\end{align*}
Note that since 
\begin{equation*}
V\backslash Q(t)=\bigcup_{k=1}^3 V_k^\pm(t),
\end{equation*}
we have 
\[   |V\backslash Q(t)|=\bigg|\bigcup_{k=1}^3 V_k^\pm(t)\bigg| \, \ge \, |V|-t^3.\]
Thus there exists an element $\~V(t)\in \{ V_k^\pm(t):k=1,2,3\}$ such that $|\~V(t)|\ge (|V|-t^3)/6$. 
Let $y'$ be the center of the face $\~V(t)\cap Q(t)$.
By
\eqref{sqdist}, any $w\in \~V(t)$ satisfies 
$|w-y|^2-|w-y'|^2 \ge t^2/4$, hence 
\begin{align*}
\int_V[|w-y|^2-d^2(w,\{y,y'\})]\d w  &\ge \int_{\~V(t)}[|w-y|^2-d^2(w,\{y,y'\})]\d w \\
&\ge \frac{|\~V(t)|t^2}4\\
&\ge \frac{(|V|-t^3) t^2}{24}.
\end{align*}
This last inequality holds for all $t\in [0, |V|^{1/3}]$. In particular, direct computation gives
that the maximum of $(|V|-t^3) t^2$ is attained at $t^3=2|V|/5$, thus
\begin{equation*}
\int_V[|w-y|^2-d^2(w,\{y,y'\})]\d w  \ge \frac{(|V|-t^3)t^2}{24}\bigg|_{t^3=2|V|/5}= \frac1{40}\bigg(\frac23\bigg)^{2/5} |V|^{5/3}
\end{equation*}
which proves \eqref{t2-d}.

\end{proof}


\begin{proof}({\bf of Lemma \ref{below}})
Although a similar estimate has been proven by Gruber in \cite{Gr2}, the lower bound therein was
only implicit. Here we give an explicit lower bound.
To this end, assume $|V|>0$, otherwise the thesis is trivial. The main idea of the proof is:
\begin{enumerate}

\item first we show that if $Y_n$ is optimal, then $y$ is in the interior of $V$,

\item  then we add another point $y'$ in $V$ (the energy difference is estimated
using Lemma \ref{d}),

\item finally we remove $y$ (energy difference to be estimated by direct computation). 
\end{enumerate}

\medskip

{\em Step 1.} 
Assume by contradiction $y\in \pd V$. Then there exists a plane $\Pi$ such that $V$ is entirely on one side of $\Pi$.
Endow $\R^3$ with a cartesian system with $\Pi=\{(x_1,x_2,x_3):x_1=0\}$, $V\sse \{x_1\ge 0\}$, $y=(0,0,0)$.
Then, 
\begin{equation*}
\int_V |z-y|^2\d z =\int_V [z_1^2+z_2^2+z_3^2]\d z_1\d z_2\d z_3,
\end{equation*}
with $z_1\geq0$ for all $z\in V$. Therefore,
\begin{equation*}
\frac{\pd}{\pd y_1}\int_V [(z_1-y_1)^2+z_2^2+z_3^2]\d z_1\d z_2\d z_3\bigg|_{y_1=0} = -2\int_V z_1\d z_1\d z_2\d z_3<0,
\end{equation*}
and $Y_n$ cannot be a minimizer.

\medskip

{\em Step 2.} In Step 1 we have proven that $y$ must be in the interior of $V$, thus we are under the hypotheses of
Lemma \ref{d}, which gives that there exists $y'$ such that 
\begin{equation}
\int_{V} [|x-y|^2 - d^2(x,\{y,y'\})]\d x \ge |V|\frac{2^2\cdot 3^3}{5^2\cdot 10^3}r^2
\ge \ggm_1|V|^{5/3},\qquad
 r:=\max_{z'\in \pd V}|y-z'|.
\label{en3-1}
\end{equation}
This means that adding $y'$ in $V$, the energy decreases by at least $ |V|\frac{2^2\cdot 3^3}{5^2\cdot 10^3}r^2$.

\medskip

{\em Step 3.} Now we have to remove $y$, and estimate how much the energy increases. Set 
$$s:=|y-y''|=\min_{z\in Y_n,\ z\neq y}|y-z|,$$
and for any $x\in V$ it holds
\begin{align}
|x-y''|^2-|x-y|^2&=(|x-y''|-|x-y|)(|x-y''|+|x-y|)\notag\\
&\le |y-y''|(2|x-y|+|y-y''|) \le s(2r+s)\notag\\
&\Lra \int_V [|x-y''|^2-|x-y|^2]\d x \le |V|s(2r+s).\label{en3-2}
\end{align}
Combining \eqref{en3-1}, \eqref{en3-2} and the minimality of $Y_n$ then gives
\begin{equation*}
s^2+2rs- r^2\frac{2^2\cdot 3^3}{5^2\cdot 10^3} \ge 0 \Lra s\geq r \bigg(\sqrt{1+\frac{2^4\cdot 3^3}{5^2\cdot 10^3}}-1\bigg).
\end{equation*}
Finally note that $V\sse B(y,r)$, hence $\omega_3 r^3\ge |V|$, and
$$s\geq r\bigg(\sqrt{1+\frac{2^4\cdot 3^3}{5^2\cdot 10^3}}-1\bigg)\ge \bigg(\sqrt{1+\frac{2^4\cdot 3^3}{5^2\cdot 10^3}}-1\bigg)
 \omega_3^{-1/3}|V|^{1/3},$$
and the proof is complete.
\end{proof}
\section{Towards a Proof of Gersho's conjecture in 3D}\label{sec-Gersho}

Let us now address the extension to 3D of Gruber's 2D proof of Gersho's conjecture. 
The following analogous results are needed: 

\begin{enumerate}
\item we first note that the average number of faces (as $n\to+\8$) of Voronoi cells is some number $\overline{m}\le 14$. This has been 
proven in \cite{14}. Note that 
$14$ is the number of faces of truncated octahedron (Voronoi cells in the BCC lattice). 

\item we {\bf need to verify} that the function
\begin{equation*}
m \quad \longmapsto \quad \min_{\substack{V \text{ convex polytope, } |V|=\al\\
V \text{ has at most } m \text{ faces }  }} \,\,\, \,\,\int_V |x-y|^2\d x,\qquad y  \text{ centroid of } V,
\end{equation*}
is convex for $m\leq N_\ast$,  where $N_\ast$ is given by Theorem \ref{main1}.
 This will allow us to  extend this function (denoted below by $G$) to the continuum $m\in [0,N^*]$, so we
can then verify its convexity. This step ensures that we can then compute
the Hessian matrix of $G$. 
\item we note that the optimal polytope $V$ with $m$ faces is space tiling.
\item we can dispense with the energetic contributions of the boundary cell. 
\end{enumerate}

With these results in hand, 
Gruber's method would then be as follows: 
let 
\begin{equation*}
G(a,m)\,:=\, \min_{\substack{V \text{ convex polytope, } |V|=a\\
V \text{ has at most } m \text{ faces }  }}\,\,\,\,  \int_V |x-y|^2\d x,\qquad y  \text{ centroid of } V,
\end{equation*}
and $G$ is convex in both variables. Then, for any arbitrary tessellation $Y_n$ (with $\sh Y_n=n$), of $Q$, 
let
$\{V_k\}$ be the collection of Voronoi cells, and let $\al_k $ be the number of faces of $V_k$. Then it follows that 
\begin{align*}
E(Y_n)&=\sum_{k=1}^{n}  \int_{V_k} |x-y|^2\d x \\& \ge \sum_{k=1}^{n} G(|V_k|,\al_k)\\
&\ge nG(1/n,\overline{m}) + 
\text{error due to boundary effects}\\
&\ge nG(1/n,14) + 
\text{error due to boundary effects}.
\end{align*}
Since the error due to boundary effects is a higher order term (actually of order
$O(n^{-1})$, compared to $n G(1/n,m)$, which has order $O(n^{-2/3})$, as $n\to+\8$)
it follows that the optimal tessellation (as $n\to+\8$) 
consists of congruent copies of a space tiling polyhedron realizing $G(1/n,14)$. 

\bigskip

Concerning issue (3), we expect the optimal polytope to be the regular truncated octahedron, since:
{ 
\begin{itemize}
 \item it is the tessellation
corresponding to the BCC lattice, which has been proven to be pretty optimal from numerical simulations (see \cite{DW}),

\item it is the {\em only} convex polytope to tile the space by translation, with 14 faces (see \cite[pp.~471--473]{Gr3}). Although
this property is valid for some irregular truncated octahedra too, we expect that for any fixed volume constraint $a$, irregular truncated octahedra
should not realize the minimum in $G(a,14)$.
\end{itemize}
}
Moreover, since a periodic CVT should have generators distributed on a lattice, by
\cite{BS} such a lattice should be the BCC one. 
However, a priori Gersho's conjecture requires only the existence of such a unique ``seed'' polytope for
Voronoi cells, without
any geometric description.

\bigskip

For issue (4), we have the following proposition  which proves that, given any cube $\om\sse Q$, the energy contribution of Voronoi cells intersecting $\pd \om$
 is negligible compared to the 
energy contribution of Voronoi cells not intersecting $\pd \om$. 

\begin{prop}\label{nob}
For any $n$, let $Y_n$ be a minimizer with $\sh Y_n=n$. Then let $\om\sse Q$ be an arbitrary cube
with positive volume, then for any sufficiently large $n$ it holds:
\begin{enumerate}
\item  the contribution to the energy of Voronoi cells intersecting
$\pd \om$ is of order $O(n^{-1})$,

\item  the contribution to the energy of Voronoi cells in $\om$ but not intersecting
$\pd \om$ is of order $O(n^{-2/3})$.
\end{enumerate}
Consequently,
the energy contribution of Voronoi cells intersecting $\pd \om$
 is negligible compared to the 
energy contribution of Voronoi cells in $\om$ not intersecting $\pd \om$. 
\end{prop}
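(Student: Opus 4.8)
The plan is to leverage the three quantitative bounds from Theorem \ref{main1}(i) — the lower bound on cell volume \eqref{cor-vol-low}, the upper bound on cell diameter \eqref{cor-diam-high}, and the resulting bound on the number of neighbors — together with Zador's asymptotic \eqref{en}. First I would count the Voronoi cells that meet $\pd\om$. Any such cell $V_y$ has $\diam(V_y)\le \ggm_4(n-2)^{-1/3}$, so its generator lies within distance $\ggm_4(n-2)^{-1/3}$ of $\pd\om$; thus every such cell is contained in the tubular neighborhood $T_n:=\{x: \dist(x,\pd\om)\le 2\ggm_4(n-2)^{-1/3}\}$ of $\pd\om$, whose volume is $O(n^{-1/3})$ (surface area of $\pd\om$ times thickness $\asymp n^{-1/3}$). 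Since each cell has volume at least $\omega_3\ggm_5^3 n^{-1}$ by \eqref{cor-vol-low}, and the cells tile, the number of cells meeting $\pd\om$ is at most $|T_n|/(\omega_3\ggm_5^3 n^{-1}) = O(n^{2/3})$.

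Next I would bound the energy of a single cell. For any cell $V_y$, convexity and $\diam(V_y)\le\ggm_4(n-2)^{-1/3}$ give $V_y\subseteq B(y,\ggm_4(n-2)^{-1/3})$, hence
\[
\int_{V_y}|x-y|^2\d x \,\le\, \int_{B(y,\ggm_4(n-2)^{-1/3})}|x-y|^2\d x \,=\, \frac{4\pi}{5}\,\ggm_4^5 (n-2)^{-5/3} \,=\, O(n^{-5/3}).
\]
Multiplying the per-cell bound $O(n^{-5/3})$ by the count $O(n^{2/3})$ of boundary-meeting cells yields that their total energy contribution is $O(n^{-1})$, which is statement (1). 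For statement (2), observe that the cells contained in $\om$ but not meeting $\pd\om$ form a subcollection of all cells, so their total energy is at most $E(Y_n) = O(n^{-2/3})$ by \eqref{en} (equivalently Zador's Theorem \ref{Zador}); conversely these cells tile a region of volume $|\om| - O(n^{-1/3})$, so a matching lower bound of order $n^{-2/3}$ follows from Lemma \ref{sphere} and Jensen's inequality exactly as in the proof of \eqref{tau-est}, applied to this subfamily. Hence their contribution is $\Theta(n^{-2/3})$, giving (2), and the final "consequently" is immediate since $n^{-1}/n^{-2/3}=n^{-1/3}\to0$.

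The main obstacle is the geometric bookkeeping in the first step: one must be careful that "cell intersects $\pd\om$" really does force the generator — and then the whole cell — into a thin tube, which uses the diameter bound twice (once to locate the generator near $\pd\om$, once to confine the cell), and one must correctly estimate $|T_n|$ for a cube $\om$ including its edges and corners, where the tube is not simply area-times-thickness but is still $O(n^{-1/3})$ since the lower-dimensional contributions are of smaller order. A secondary technical point is justifying the lower bound in (2): one needs the subcollection of interior cells to still exhaust a region of volume bounded below (here $|\om|/2$, say, for $n$ large), which again follows from the tube estimate, so that Jensen's inequality over roughly $|\om|\,n$ cells of average volume $\asymp 1/n$ produces the claimed $n^{-2/3}$ lower order. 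All remaining computations are routine.
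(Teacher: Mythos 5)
Your proposal is correct and follows essentially the same architecture as the paper's proof: confine the cells meeting $\pd\om$ to a tube of thickness $O(n^{-1/3})$ via the diameter bound \eqref{cor-diam-high}, count them as $O(n^{2/3})$ via the volume bound \eqref{cor-vol-low}, bound each cell's energy by $O(n^{-5/3})$, and multiply to get $O(n^{-1})$; then invoke Zador's asymptotic for the $n^{-2/3}$ order of the interior contribution.

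The one place where you genuinely diverge is the lower bound in statement (2). The paper obtains it by subtracting the boundary cells' contribution from the \emph{global} energy $E(Y_n)\ge \tfrac{\tau}{2}n^{-2/3}$ and then using \eqref{tau-est}; strictly speaking this lower-bounds the contribution of \emph{all} cells not meeting $\pd\om$, including those outside $\om$, so it only matches the statement when $\om=Q$. Your route — noting that the cells contained in $\om$ and avoiding $\pd\om$ cover a region of volume $|\om|-O(n^{-1/3})\ge|\om|/2$, and then applying Lemma \ref{sphere} together with Jensen's inequality to that subfamily (at most $n$ cells) exactly as in the proof of \eqref{tau-est} — yields the properly localized bound $\gtrsim |\om|^{5/3}n^{-2/3}$ and is the cleaner way to justify the statement as written. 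Both arguments deliver the needed conclusion that the boundary contribution is smaller by a factor $n^{-1/3}$.
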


\begin{proof}
Choose $n\gg 1$, and a minimizer $Y_n$ with $\sh Y_n=n$. 
We will establish (from Claims 1--3) that the energy contribution of Voronoi cells
intersecting $\pd \om$ is negligible as $n\to +\8$. In the following $\ggm_i$ ($i=1,3,4$) will be constants
from Theorem \ref{main1}.

\medskip

\begin{itemize}
\item Claim 1: at most $\frac{6\ggm_4}{\omega_3 \ggm_5^3 } n^{2/3}$ Voronoi cells can intersect $\pd \om$.
\end{itemize}
To prove this claim,
estimate \eqref{cor-diam-high} gives that the diameter of each Voronoi cell
is at most $\ggm_4n^{-1/3}$, hence all the Voronoi cells intersecting $\pd \om$ are contained in 
\begin{equation*}
\{x:d(x,\pd \om)\le \ggm_4n^{-1/3}\}.
\end{equation*}
Estimate \eqref{cor-vol-low} gives that the volume of any Voronoi cell is at least $\omega_3 \ggm_5^3 n^{-1}$,
hence
at most
\begin{equation*}
\frac{|\{x:d(x,\pd K)\le \ggm_4n^{-1/3}\}|}{\omega_3 \ggm_5^3 n^{-1}}
\le \frac{6\ggm_4}{\omega_3 \ggm_5^3 }n^{2/3}
\end{equation*}
can intersect $\pd \om$. Thus Claim 1 is proven.

\medskip

\begin{itemize}
\item Claim 2: the energy contribution of all Voronoi cells intersecting $\pd \om$
is at most 
$$\frac{3\ggm_4^6 n^{-5/3}}{4 \ggm_5^3}=O(n^{-1}).$$

\end{itemize}
Let $(y_k)_k\sse Y_n$ be the (finite) collection of atoms such that their Voronoi cells
$(V_k)_k$ intersect $\pd\om$.
Estimate \eqref{cor-diam-high} proves that, for any $k$, $\diam(V_k)\le \ggm_4n^{-1/3}$,
hence $V_k\sse B(y_k,\ggm_4n^{-1/3}/2)$ and
\begin{equation*}
\int_{V_k} |x-y_k|^2\d x \le |V_k|\diam^2(V_k) \le |B(y_k,\ggm_4n^{-1/3}/2)|\diam^2(V_k)
\le \frac{\omega_3\ggm_4^5 n^{-5/3}}8. 
\end{equation*}
Since Claim 1 proves that at most $\frac{6\ggm_4^3}{\ggm_1}n^{2/3}$
Voronoi cells can intersect $\pd\om $, the energy contribution of all such cells is at most
\begin{equation*}
\frac{6\ggm_4}{\omega_3 \ggm_5^3 }n^{2/3}\cdot \frac{\omega_3\ggm_4^5 n^{-5/3}}8 
= \frac{3\ggm_4^6 n^{-1}}{4 \ggm_5^3}
\end{equation*}
and Claim 2 is proven.

\medskip

\begin{itemize}
\item Claim 3: the energy contribution of all Voronoi cells in $\om$ which do not intersect $\pd \om$
is at least 
\begin{equation*}
 \frac{\pi}{5}\omega_3^{-5/3}n^{-2/3}-\frac{3\ggm_4^6 n^{-1}}{4 \ggm_5^3}= O(n^{-2/3}).
\end{equation*}
\end{itemize}
Zador's asymptotic estimate proved that there exists $\tau>0$ such that $n^{2/3}E(Y_n)\to \tau$. 
Thus, for $n$ large we have
\begin{equation*}
2\tau n^{-2/3}\ge E(Y_n)\ge \frac\tau2 n^{-2/3},
\end{equation*}
and the contribution of cells not intersection $\pd \om$ is estimated by
\begin{equation*}
2\tau n^{-2/3}\ge E(Y_n)-\frac{3\ggm_4^6 n^{-5/3}}{4 \ggm_5^3} \ge 
\frac\tau2 n^{-2/3}-\frac{3\ggm_4^6 n^{-1}}{4 \ggm_5^3},
\end{equation*}
and since we proved $\tau\ge \frac{2\pi}{5}\omega_3^{-5/3}$,
 Claim 3 follows.
\end{proof}

\bigskip

Thus the fundamental remaining issue for the proof of Gersho's conjecture in 3D is  (2). 
Note that the convexity of $G$ in the volume variable is almost trivial due to scaling:
without loss of generality assume the centroid is $y=0$, and by using a scaling of ratio $r$ we obtain 
\[\int_{rV} |x|^2 \d x= r^5\int_{V} |x|^2 \d x, \]
{\em independently of } the number of faces of $V$.

{ To prove the convexity of $G$ in the other variable (i.e. the number of faces), note that the bound on the number of faces implies also an uniform
bound on the number of vertices. Since we need only the convexity of $G$ for polytopes with up to $N_*$
faces, let $M_*$ be the maximum number of vertices of all such polytopes.} Thus one can write the integral
\[\int_{V} |x-y|^2\d x \]
as a function of the vertices $\{v_1,\cdots,v_m\}$ only ($v_i \in \R^3$, $m\le N_*$): the cell $V$ is indeed the convex combination 
of its vertices, hence any $x\in V$ is of the form 
$x=\sum_{k=1}^m a_k v_k$. Similarly, the centroid $y := |V|^{-1}\int_V x dx$ can be also expressed in terms
of the vertices:
\begin{eqnarray*}
y & =& \frac1{|V|} \int_V x \, dx \\
&  = & \frac1{|V|}\int_{\left\{ {{\mathbf a}}\, : \, a_k\ge 0,\,  \sum_{k=1}^m a_k=1 \right\}} \, \sum_{k=1}^m a_kv_k \,\, \d { {\mathbf a}} \qquad\qquad  { {\mathbf a}:=(a_1,\cdots,a_m)}.
\end{eqnarray*}
Hence, if we define   
\begin{eqnarray*}
  I(v_1,\cdots,v_m) &:=&  \int_{V} |x-y|^2\d x   \\
& = &  \int_{\left\{{ {\mathbf a}} \, :\,  a_k\ge 0, \,  \sum_{k=1}^m a_k=1 \right\}} 
\,\,  \bigg|\,  \sum_{k=1}^m a_k v_k \, 
 - \frac1{|V|}\int_{\left\{{ \~{\mathbf a}}\, : \,  \~a_k\ge 0, \, \sum_{k=1}^m \~a_k=1 \right\}} \sum_{k=1}^m \~a_k v_k \,\, \d { \~{\mathbf a}}  \bigg|^2
 \d { {\mathbf a}}, 
\end{eqnarray*}
we see that problem reduces to convex minimization in $3 m$ variables over a convex constraint;  That is, we solve 
\[\min_{v_1,\cdots,v_m} I(v_1,\cdots,v_m)\]
under the constraint that $V$ is a convex polytope with unit volume.

\section{Conclusion and future directions}
In this paper we have shown that Voronoi cells in optimal CVTs have at most $N_*$ faces, with $N_*$ independent of the number of generators. This
allowed us to reduce Gersho's conjecture in 3D, which is intrinsically nonlocal and infinite dimensional (as it requires the number
of generators to tend to infinity), to a local and finite dimensional problem
of studying the convexity of $G$ on convex polytopes with at most $N_*$ faces. 
In our opinion, this alone is an achievement. 
However, the issue remains that the current bound on $N_*$ is far too big for computer verification. Note that 
the fact that we are interested only in the convexity of $G$  allows us to have computational errors, as long as these
are sufficiently small not to influence the convexity. 
While we have tried to optimize constants within the framework of our method, one should seek different more optimal techniques for our bounds to lower the threshold for $N_*$.

%
%
%

\bigskip
{\bf Acknowledgements:} This work was begun while 
Lu was a postdoctoral fellow at McGill University. He would like to 
thank the CRM ({\it Centre de Recherches Math\'{e}matique}) for their partial support during this period,
and Lakehead University for their partial support through its startup and RDF fundings. 
Both authors acknowledge the support of {\it NSERC} through their {\it Discovery Grants Program}.  
The authors would also like to thank David Bourne for his comments on a previous draft.

\end{document}